\newtheorem{theorem}{Theorem}[section]
\newtheorem{lemma}[theorem]{Lemma}
\newtheorem{proposition}[theorem]{Proposition}
\newtheorem{definition}[theorem]{Definition}
\newtheorem{corollary}[theorem]{Corollary}
\newtheorem{example}{Example}[section] 
\newtheorem{assumption}{Assumptions}
\newtheorem{prop}{Proposition}[section]
\newtheorem{remark}[prop]{Remark}
\makeatletter \@addtoreset{equation}{section} \makeatother
\def\ddt{\frac{d}{dt}}
\def\RR{{\mathrm R}}
\def \2R{{\hat{\RR}}}
\def\Rc{{\mathrm {Rc}}}
\def\SS{{\mathrm S}}
\def\He{\mathrm {Hess}}
\def\id{\mathrm{Id}}
\def\xi{\partial_{x_i}}
\def\tr{\text{tr}}
\begin{document}
\title{On Ricci Solitons with Isoparametric Potential Functions}
\author[Hung Tran]{Hung Tran}
\address{Department of Mathematics and Statistics,
	Texas Tech University, Lubbock, TX 79413}
\email{hung.tran@ttu.edu} 


\author[Kazuo Yamazaki]{Kazuo Yamazaki}
\address{Department of Mathematics, University of Nebraska, Lincoln, 243 Avery Hall, PO Box, 880130, Lincoln, NE 68588-0130, U.S.A.; Phone: 402-473-3731; Fax: 402-472-8466}
\email{kyamazaki2@nebraska.edu}


\subjclass[2010]{Primary 53C44; Secondary 53C21, 53C25}

\date{}

\allowdisplaybreaks
\begin{abstract} This paper studies a complete gradient Ricci soliton with an isoparametric potential function. Our first theorem asserts that, for the steady case, there is a critical level set of codimension greater than one. This is consistent with construction of cohomogeneity one models with singular orbits. There is a partial result for the shrinking case. We also study a particular ansatz of popular interest and obtain asymptotic behaviors.  
\end{abstract}
\maketitle
\tableofcontents

\section{Introduction}
A gradient Ricci soliton (GRS) arises naturally in the theory of Ricci flows and its celebrated applications including \cite{H3, HPCO, Hsurvey, perelman1, perelman2, perelman3, bs091, bs072, bohmwilking}; see \cite{caotran6survey} for a recent survey. Indeed, a GRS $(M^n, g, f, \lambda)$ is a Riemannian smooth $n$-dimensional manifold $M$ with metric $g$, potential function $f$, and a constant $\lambda$ such that, for $\Rc$ denoting the Ricci curvature,  
\begin{equation}\label{GRS}
	\Rc+\text{Hess}{f}=\lambda g. 
\end{equation}
Corresponding to the sign of $\lambda$, a GRS is called shrinking $(\lambda>0)$, steady ($\lambda=0$), or expanding $(\lambda<0 )$. In addition, a GRS is also a generalization of an Einstein metric, a topic of great independent interest.

Many known non-trival explicit examples are constructed with cohomogeneity one symmetry \cite{caohd96, CV96, caohd97limits, FIK03Kahler, koi90, PTV99quasi, dw11, buttsworth2021}. It is curious to observe that, in all such constructions, there is always a singular orbit. Our first result will show that property is indeed necessary for a large family of GRSs. 

A smooth function on a Riemannian manifold is called transnormal if the norm of its gradient is constant on each level set; equivalently, its integral curves are reparamertrized geodesics. If, additionally, the mean curvature of each level set is constant, then the function is called isoparametric. The study of those functions in space forms was motivated by questions in geometric optics and initial contributions were given in \cite{som1918, cartan38, lv37}. The classification in an ambient round sphere, formulated as Question 34 in S.T. Yau's list \cite{Yauopen93}, is remarkably deep. It has attracted enormous interest and important developments are given by, for example, \cite{nomizu73, Munzner80iso, Abresch83foursix, CCJ07four, chi20four}; see \cite{chi20survey} for a recent survey. Certain aspects of the theory can be extended to a general Riemannian manifold \cite{wang87iso, gt13exotic, miyaoka13, QT15, derd21}.

In the context of a GRS, it was observed in \cite{tran23contact} that a transnormal potential function is automatically isoparametric. Furthermore, all aforementioned cohomogeneity-one examples have isoparametric potential functions. An equivalent condition is considered in \cite{kim17harmonic} leading to a classification for GRS with harmonic Weyl curvature. It also arises as a consequence of constant scalar curvature \cite{FG16csc, CZ2021rigidity}. Furthermore, in \cite[Section 3]{CDSexpandshriking19}, the authors discovered that an appropriate resealing of the potential function, on an expanding KGRS, converges to a transnormal function with respect to a suitable metric. Additionally, the first author was able to classify all complete K\"{a}hler GRS surfaces with such a potential function \cite{tran23contact}.  

In this note, we study a real GRS with an isoparametric potential function and obtain the following results. Here a singular foil means a level set corresponding to a critical value and of co-dimension greater than one. 
\begin{theorem}
	\label{main1}
	Let $(M, g, f)$ be a non-trivial complete steady GRS with a transnormal potential function $f$. Then $f$ has a unique singular foil corresponding to its global maximum.
\end{theorem}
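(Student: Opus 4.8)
The plan is to exploit the rigidity that the steady soliton equation imposes on the level-set foliation of $f$, reducing everything to a first-order ODE in the single variable $f$. First I would record the standard identities: tracing \eqref{GRS} gives $R=-\Delta f$ for the scalar curvature $R$, while the steady Hamilton identity gives $R+|\nabla f|^{2}=C$ for a constant $C$. Since $f$ is transnormal, by the result recalled above (\cite{tran23contact}) it is isoparametric, so $|\nabla f|^{2}=b(f)$ and the mean curvature $H$ of each regular level set depend on $f$ alone; thus $b=C-R$. Invoking the well-known nonnegativity $R\ge 0$ on a complete steady soliton together with the strong maximum principle applied to $\Delta_{f}R=-2|\Rc|^{2}\le 0$ (where $\Delta_f$ is the drift Laplacian), non-triviality forces $R>0$ everywhere and $C>0$, so $b$ takes values in $[0,C)$ and the critical points of $f$ are exactly the points where $R=C$.

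Next I would set up the geometry along a unit-speed integral curve of $\nu=\nabla f/|\nabla f|$, oriented so that $f$ increases, for which $df/ds=\sqrt{b(f)}$ and $\Rc(\nu,\nu)=-\tfrac12 b'(f)$. Restricting \eqref{GRS} to a level set $\Sigma$ gives the clean relation $\Rc|_{T\Sigma}=-\sqrt{b}\,A$, where $A$ is the shape operator; tracing yields $H=-\big[(C-b)+\tfrac12 b'\big]/\sqrt{b}$, while $A$ obeys a Riccati equation along $\nu$ whose trace reads $H'=-|A|^{2}-\Rc(\nu,\nu)$. With this in hand, existence of a critical level set follows from a dichotomy as $f$ increases: either $H<0$ at some point, in which case $H'\le -H^{2}/(n-1)$ drives $H\to-\infty$ in finite arclength and the tubes collapse (one checks $b\to 0$ there since $R^{\mathrm{tan}}=-\sqrt{b}\,H$ stays bounded); or $H\ge 0$ throughout, which forces $b'\le -2R$, i.e.\ $R'\ge 2R$ as a function of $f$, contradicting the bound $R<C$ unless the range of $f$ terminates at a finite value $f_{\max}$ with $b(f_{\max})=0$, reached at finite distance because $b'$ is then bounded away from zero. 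In either case $b$ has a genuine zero, producing the foil.

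To pin down its location and uniqueness I would run a sign test at a critical value. Approaching a would-be minimum foil the outward mean curvature of the collapsing tubes must tend to $+\infty$, whereas the displayed formula for $H$ (with $b\to 0^{+}$ and $b'\ge 0$ there) forces $H\to-\infty$; the same contradiction excludes any interior critical value. Hence every critical value is a maximum, and by the structure theory of transnormal functions, whose critical values are precisely the finite endpoints of the connected range \cite{wang87iso, miyaoka13}, the range must be $(-\infty,f_{\max}]$ with a single foil at the global maximum. Toward decreasing $f$ one has $\Rc(\nu,\nu)\to 0$ and $H\to 0$, confirming a complete, non-collapsing end with no lower foil.

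The main obstacle is upgrading ``critical level set'' to ``singular foil,'' i.e.\ establishing codimension $>1$. The strategy is to rule out a codimension-one maximum set $F$. On such an $F$ the Morse--Bott structure gives $\text{Hess}\,f|_{TF}=0$, and since $A$ stays bounded when there is no collapse, the relation $\Rc|_{TF}=-\sqrt{b}\,A$ forces the tangential Ricci curvature to vanish along $F$, so that $\Rc$ degenerates to a rank-one tensor in the normal direction and the borderline behavior $b'(f_{\max})=-2C$ is produced. Propagating this degeneracy through the Riccati--soliton system, I expect a local isometric splitting $\mathbb{R}\times N$ with $N$ Ricci-flat, whose steady potential is necessarily affine and hence critical-point-free, contradicting the maximum just produced. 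Making this propagation rigorous, and controlling the degenerate asymptotics it entails, is where the real work lies.
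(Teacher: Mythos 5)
Your proposal has two genuine gaps, and the second one sits exactly at the heart of the theorem.

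First, the existence dichotomy is broken in the branch ``$\tr L<0$ at some point.'' The traced Riccati equation, i.e.\ the second line of \eqref{reducedsolitionsystem} with $\lambda=0$, reads $\tfrac{d}{dt}\tr L=-\tr(L^{2})-\Rc(\nu,\nu)$, and on a steady soliton \eqref{scalar1} gives $\Rc(\nu,\nu)=-f''=\SS+f'\,\tr L$. With your orientation $f'=\sqrt{b}>0$, this term is \emph{not} nonnegative when $\tr L<0$: as soon as $\sqrt{b}\,|\tr L|>\SS$ one has $\Rc(\nu,\nu)<0$, so the inequality $\tfrac{d}{dt}\tr L\le-(\tr L)^{2}/(n-1)$ that you invoke is unjustified, and in fact $\tr L$ can increase back into the region $\tr L\ge 0$, where neither branch of your dichotomy yields a conclusion. (This can be repaired, e.g.\ via the quantity $f'-\tr L$, whose derivative is $\tr(L^{2})+\lambda\ge 0$ by \eqref{reducedsolitionsystem} -- this is how the paper argues in the shrinking case -- but as written the case analysis is incomplete. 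The paper instead splits on the behavior of $f''$: Lemmas \ref{f'decreasing}, \ref{casesf'}, \ref{f''monotonic}.)

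Second, and more seriously, the step that distinguishes ``singular foil'' from ``critical level set'' -- excluding a codimension-one maximum -- is precisely the part you leave as a speculative sketch, and your sign test genuinely cannot reach it: by \eqref{singularscalar} with $k=1$, at such a level one has $\Rc(\nu,\nu)=\SS$, i.e.\ $\tfrac12 b'\to-C$, so the numerator $(C-b)+\tfrac12 b'$ in your formula for the mean curvature tends to $0$ and no blow-up is forced. Your proposed remedy -- that the rank-one degeneracy of $\Rc$ along $F$ ``propagates'' to an isometric splitting $\mathbb{R}\times N$ -- is asserted rather than proved; pointwise degeneracy of the Ricci tensor on a single hypersurface provides no parallel distribution and no maximum principle, so there is no known mechanism producing the splitting. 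The paper closes this step by an elementary global argument (Proposition \ref{criticalimpliessingular}): a codimension-one critical level, together with uniqueness of the critical level, forces $M$ to be diffeomorphic to $P\times\mathbb{R}$, so $f,f',f'',\tr L$ are defined on all of $\mathbb{R}$ with $f''<0$ everywhere by Lemma \ref{casesf'}; the whole-line Riccati/Cauchy--Schwarz argument (Lemmas \ref{noentirefunction} and \ref{f'decreasing}) then forces $\tr L\equiv 0$, $f''\equiv 0$, hence $\SS\equiv 0$, contradicting $\SS>0$ for a non-trivial complete steady GRS. Until you supply an argument of comparable force for this exclusion, the proposal does not prove the theorem.
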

\noindent For the shrinking case, we have the following. 
\begin{theorem}
	\label{main2}
	Let $(M, g, f)$ be a complete shrinking GRS with a transnormal potential function $f$. Then either $f$ has a singular foil or the manifold is diffeomorphic to $\mathbb{R} \times P$ for some compact $P$. 
\end{theorem}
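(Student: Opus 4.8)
The plan is to reduce the statement to the geometry of the critical set of $f$ and to exploit the properness of $f$ that is special to the shrinking regime. First I would record two structural facts. Since $f$ is a transnormal potential on a GRS, it is automatically isoparametric by \cite{tran23contact}, so I may invoke the structure theory of transnormal/isoparametric functions on complete manifolds (\cite{wang87iso, miyaoka13}): the only critical values of $f$ are the endpoints of its image, each critical level set (focal variety) is a smooth closed submanifold, every regular level set is a tube over it, and the normalized gradient flow of $f$ realizes a deformation retraction of $M$ onto a focal variety, identifying $M$ with the total space of the latter's normal bundle. Second, a complete shrinking GRS obeys the Cao--Zhou quadratic growth estimate for its potential (see, e.g., \cite{caotran6survey}): $f$ is proper, bounded below, and tends to $+\infty$ at infinity. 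Combining these, $f$ attains its global minimum, every level set is compact, and the image of $f$ is exactly $[c_0,\infty)$ with $c_0=\min f$; hence the unique critical value is $c_0$ and the unique focal variety is the compact submanifold $\Sigma=f^{-1}(c_0)=\{\nabla f=0\}$. It is worth emphasizing that, in contrast to what one might expect, the product alternative cannot arise from $f$ having no critical point: the minimum always exists, so $\Sigma\neq\emptyset$ for every complete shrinking GRS.

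The argument then splits on the codimension $k=\operatorname{codim}\Sigma\ge 1$. If $k\ge 2$, then $\Sigma$ is by definition a singular foil corresponding to the global minimum and the first alternative holds immediately. If $k=1$, then $\Sigma$ is a compact hypersurface and I would show $M\cong\mathbb{R}\times P$ with $P=\Sigma$. The idea is to flow off of $\Sigma$ along $\nabla f/|\nabla f|$: because $f$ is transnormal these integral curves are unit-speed geodesics, and because $c_0$ is the \emph{only} critical value the transnormal identity $|\nabla f|^2=b(f)$ forces $b>0$ on $(c_0,\infty)$, so $\nabla f\neq 0$ off $\Sigma$ and the geodesics meet no further focal points. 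Completeness of $g$ extends these geodesics for all time, and properness of $f$ (they escape every compact set as $f\to\infty$) then makes the normal exponential map of $\Sigma$ a diffeomorphism onto $M$, exhibiting $M$ as the total space of the rank-one normal bundle of the compact $\Sigma$.

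The main obstacle, and the step on which I would spend the most care, is the global rigidity in the $k=1$ case: promoting the normal-bundle description to the literal product $\mathbb{R}\times P$. This amounts to the two-sidedness of $\Sigma$, i.e.\ the triviality of its normal line bundle, which I would try to extract from the ambient gradient flow together with the fact that $f$ is, near $\Sigma$, an even function of the signed normal distance. I expect this to be the genuinely delicate point, since without two-sidedness the flow argument only yields that $M$ is the total space of a (possibly nontrivial) line bundle over $\Sigma$; establishing triviality is where an orientability input, or a finer use of the soliton equation $\Rc+\He f=\lambda g$ along $\Sigma$ (which gives $\He f$ of rank at most one and $\Rc|_{T\Sigma}=\lambda\,g|_{T\Sigma}$), should enter. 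I would also separately dispose of the degenerate case $k=0$, equivalently $f$ constant, which is the trivial Einstein soliton excluded by non-triviality of the potential.

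As a consistency check I would point to the round cylinder $\mathbb{R}\times S^{n-1}$ with $f=\tfrac{\lambda}{2}t^{2}$, where $\Sigma=\{t=0\}$ has codimension one and $M$ is exactly a product; this shows the second alternative is genuinely realized. The same example clarifies why Theorem \ref{main1} admits no such alternative: a steady product $\mathbb{R}\times P$ would force $\He f=0$ and $\Rc_P=0$, hence flatness and triviality, so in the steady case only the singular foil survives.
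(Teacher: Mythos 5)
Your overall skeleton is the same as the paper's: both arguments start from the quadratic growth estimate \eqref{growthf} of Haslhofer--M\"uller/Cao--Zhou to get properness of $f$, compactness of level sets, and a non-empty critical set, and both then split on the codimension of the critical level set, the codimension $\geq 2$ case being the singular-foil alternative. The one step where you genuinely diverge is how recurrence is excluded: the paper, repeating the argument of Proposition \ref{criticalimpliessingular}, arrives at the alternative ``$M\cong P\times\mathbb{R}$ or $P\times\mathbb{S}^1$'' and then kills the circle case with the soliton ODE \eqref{reducedsolitionsystem}, namely $\frac{d}{dt}(f'-\tr L)=\lambda+\tr(L^2)\geq\lambda>0$, which contradicts periodicity of $f'-\tr L$; you never face this case because you use the structure theory of transnormal functions together with the fact that $\sup f=+\infty$ is not attained, so the minimum level is the unique focal variety. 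Both mechanisms are valid; yours is softer, the paper's is self-contained at the ODE level.

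However, the step you flagged as the delicate one --- promoting ``$M$ is the total space of the rank-one normal bundle of the compact critical hypersurface $\Sigma$'' to ``$M\cong\mathbb{R}\times P$'' --- is a genuine gap, and in fact it cannot be closed as stated, because the normal line bundle need not be trivial. Consider the round cylinder shrinker $\mathbb{R}\times S^{n-1}$ with potential $f=\frac{\lambda}{2}t^{2}$ and quotient by the free isometric involution $(t,x)\mapsto(-t,-x)$. The potential descends, remains transnormal, and its critical set is a one-sided copy of $\mathbb{RP}^{n-1}$ of codimension one, so there is no singular foil; the quotient is the total space of the tautological (non-trivial) line bundle over $\mathbb{RP}^{n-1}$. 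For $n=3$ this quotient is orientable (the involution preserves the orientation of the product) and has $\pi_{1}=\mathbb{Z}_{2}$; any product $\mathbb{R}\times P$ with $P$ compact and $\pi_{1}(P)=\mathbb{Z}_{2}$ would force $P=\mathbb{RP}^{2}$, and $\mathbb{R}\times\mathbb{RP}^{2}$ is non-orientable, so the quotient is not diffeomorphic to $\mathbb{R}\times P$ for any compact $P$. Thus the strongest conclusion reachable in the codimension-one case is exactly what Theorem \ref{structuretrans} provides: $M$ is a possibly twisted line bundle over a compact hypersurface, equivalently $M$ becomes a product $\mathbb{R}\times P$ after passing to a double cover.

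You should also be aware that the paper's own proof makes the same silent jump: in Proposition \ref{criticalimpliessingular} it passes from ``locally a product'' to ``globally $P\times\mathbb{R}$ or $P\times\mathbb{S}^1$,'' ignoring the one-sided possibility, and the proof of the theorem inherits this. (In the steady case this is harmless, since one can run the contradiction argument on the double cover, where scalar curvature positivity is preserved; in the shrinking case the twisted quotient above is a genuine example rather than a contradiction.) So your caution identified a real issue rather than a defect peculiar to your own approach; but as a proof of the statement as literally written, your proposal --- like the paper's proof --- is incomplete at precisely this point, and the honest fix is to weaken the second alternative to the twisted-bundle (or double-cover) conclusion.
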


\begin{corollary}
	\label{coro1}
	Let $(M, g, f)$ be a compact shrinking GRS with a transnormal potential function $f$. Then there must be exactly two singular foils of $f$ corresponding to its global minimum and maximum.  
\end{corollary}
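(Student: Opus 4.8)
The plan is to combine Theorem~\ref{main2} with compactness and the structure theory of isoparametric functions, the genuine work lying in upgrading each extremal level set to a singular foil. First I would dispose of existence and non-triviality. Since $M$ is compact it cannot be diffeomorphic to $\mathbb{R}\times P$; hence Theorem~\ref{main2} guarantees that $f$ has at least one singular foil, and in particular $f$ is non-constant (a constant potential would give an Einstein metric with no foil). By compactness $f$ attains its global minimum $c_-$ and global maximum $c_+$, with $c_-<c_+$, and both are critical values.

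Next I would pin down the critical values. Recall that for a GRS a transnormal $f$ is automatically isoparametric \cite{tran23contact}, so I may invoke the structure theory \cite{wang87iso}: along unit-speed normal geodesics $f$ is strictly monotone between consecutive critical values, $|\nabla f|^2 = b(f)$ vanishes exactly at critical values, and on a complete manifold the critical values are precisely the endpoints of the range $f(M)$. As $M$ is compact, $f(M)=[c_-,c_+]$, so $c_\pm$ are the only critical values; the only critical level sets are $M_\pm:=f^{-1}(c_\pm)$, and $M$ decomposes as a union of two tubular neighborhoods (disk bundles) $B_-,B_+$ of $M_-,M_+$ glued along a fixed regular level set $L$, where the fiber dimension of $B_\pm$ equals $\mathrm{codim}\,M_\pm$.

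It remains to prove the heart of the statement: that both $M_-$ and $M_+$ are genuine singular foils, i.e. each has codimension greater than one. By the previous paragraph at least one of them, say $M_+$, is a singular foil, so $\mathrm{codim}\,M_+\ge 2$ and $\partial B_+$, an $S^{\mathrm{codim}\,M_+-1}$-bundle over $M_+$, is connected. Suppose toward a contradiction that $M_-$ has codimension one. Then along every normal geodesic through $M_-$ the function $f$ attains an interior minimum and the geodesic passes through $M_-$, producing a local reflection of the level-set structure and forcing $L=\partial B_-$ to be an $S^0$-bundle over $M_-$. I would then extract a contradiction from the soliton equation: writing $R$ for the scalar curvature, the identities $R+\Delta f=n\lambda$ and $R+|\nabla f|^2-2\lambda f=\text{const}$ yield $R=2\lambda f-b(f)+\text{const}$, a function of $f$ alone, whence by the transnormal Laplacian formula $\Delta f=\tfrac12 b'(f)+H|\nabla f|$ the mean curvature $H$ is also a function of $f$. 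The Riccati equation for the shape operator of the level sets then governs their degeneration at each end, and the shrinking sign $\lambda>0$, integrated across $[c_-,c_+]$, constrains the boundary behavior of $H$ at the two endpoints so that a codimension-one degeneration at $c_-$ is incompatible with a codimension-$\ge 2$ degeneration at $c_+$; equivalently, a codimension-one critical level is exactly the seed of the splitting alternative of Theorem~\ref{main2}, which compactness forbids. Hence $\mathrm{codim}\,M_-\ge 2$ as well, giving exactly two singular foils, at the global minimum and maximum.

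The hard part is this last step: ruling out a mixed configuration in which one extremal level set is a singular foil while the other is a codimension-one critical hypersurface. Topology alone permits such configurations, since they occur for height-type functions on spaces such as $\mathbb{RP}^2$, so the argument must use the shrinking soliton equation in an essential way, through the Riccati--Jacobi analysis of the collapsing level sets and the global sign constraint imposed by $\lambda>0$; this is where I expect the main difficulty to lie.
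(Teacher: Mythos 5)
Your first two paragraphs are correct and coincide with what the paper actually does for Corollary~\ref{coro1}: compactness rules out the alternative $\mathbb{R}\times P$ in Theorem~\ref{main2}, so at least one singular foil exists, and Theorem~\ref{structuretrans} together with the remark following it gives at most two singular foils, each located at a global extremum of $f$. The paper presents the corollary as an immediate consequence of exactly these two ingredients and supplies no further argument. Your third paragraph is therefore the point of divergence, and you have located the real issue correctly: ``at least one'' plus ``at most two'' does not yield ``both extremal level sets are singular,'' and the mixed configuration (singular foil at the maximum, codimension-one critical hypersurface $M_-$ at the minimum) is topologically possible, as your $\mathbb{RP}^2$ example shows.

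The problem is that neither mechanism you propose closes this case. (a) Your claim that ``a codimension-one critical level is exactly the seed of the splitting alternative of Theorem~\ref{main2}'' is false. The splitting in that proof, borrowed from Proposition~\ref{criticalimpliessingular}, requires that \emph{no} singular foil exist, so that the $f$-segments never focus and the local product structure near the critical hypersurface propagates to all of $M$. In the mixed configuration the $f$-segments leaving $M_-$ focus precisely at the singular foil $M_+$: the manifold is an interval bundle over $M_-$ glued to a disk bundle over $M_+$ (M\"obius band plus disk in the $\mathbb{RP}^2$ picture), which is compact and is not a product, so compactness produces no contradiction. (b) The Riccati/monotonicity idea, made concrete with the paper's own quantity $f'-\tr L$, whose derivative equals $\lambda + \tr(L^2)\geq \lambda>0$ by \eqref{reducedsolitionsystem} (this is exactly how the paper kills the $P\times \mathbb{S}^1$ case in Theorem~\ref{main2}), also fails here: along a normal geodesic from $M_-$ to $M_+$ this quantity is finite at the codimension-one end, increases to $+\infty$ as one approaches $M_+$ (where $\tr L\to -\infty$), and restarts from $-\infty$ on the far side of $M_+$ (where $\tr L\to+\infty$); strict monotonicity on each regular segment is perfectly consistent with this picture, because the blow-up at the singular foil absorbs it. So the ``global sign constraint imposed by $\lambda>0$, integrated across $[c_-,c_+]$'' does not produce the incompatibility you assert. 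The heart of the statement --- excluding the mixed configuration for a shrinking soliton --- is thus left unproved by your sketch; whatever closes it must go beyond the conservation-law monotonicity and the topological splitting, and nothing in your proposal (nor, explicitly, in the paper's two cited ingredients) supplies that argument.
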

\begin{remark} An example for Corollary \ref{coro1} is the construction of a shrinking (K\"{a}hler) GRS on the blow-up of $\mathbb{CP}_n$ at one point \cite{koi90,caohd96}. 
\end{remark}

In a general high dimension, a complete classification of GRS is currently out of reach. Even the cohomogeneity-one case poses a significant challenge which is two-fold: the homogeneous principal orbit and the associated ODE system. In this note, we study a particular ansatz, generalizing the cohomogeneity one setup, and focus on the technicality of the latter. 
  
 \textit{ \textbf{An Ansatz:} Let ${N}^{n-1}(k)$ be a K\"{a}hler-Einstein manifold with $\Rc_N=k \id$, $I$ an interval, and functions $H, F: I\mapsto \mathbb{R}^+$. $(P, g_t)$ is a Riemann submersion of a line or circle bundle with coordinate $z$ over $({N}, F^2 g_N)$ and a bundle projection $\pi: P\mapsto \mathbb{N}$. $\eta$ is the one-form dual of $\partial_z$ such that $d\eta=q\pi^\ast \omega_{\mathbb{N}}$ for $q\in \{0, 1\}$. The metric on $I\times P$ is given by} 
 \begin{equation}\label{ansatz}
 	g = dt^2+g_t=dt^2+ H^2(t) \eta\otimes \eta + F^2(t)\pi^\ast g_\mathbb{N}.
 \end{equation}

Let $(M, g, f, \lambda)$ be a complete steady GRS given by ansatz \eqref{ansatz} and $f=f(t)$. By Theorem \ref{main1}, there is a singular foil and, without loss of generality (w.l.o.g), we assume it occurs at $t=0$. We show that such a GRS possesses similar properties to a K\"{a}hler one. 
\begin{theorem} 
	\label{main3}
	Let $(M, g, f, \lambda)$ be a complete steady GRS with the metric given by ansatz \eqref{ansatz} for $q=1$ and $f=f(t)$. Then
	\begin{enumerate}
		\item $H(0)=0$ and $H$ is increasing;
		\item $F$ is either increasing or it has a global minimum. Furthermore, 
		\[\lim_{t\rightarrow \infty}F =\infty. \]
	\end{enumerate}
\end{theorem}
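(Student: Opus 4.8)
The plan is to turn the soliton equation \eqref{GRS} with $\lambda=0$ and $f=f(t)$ into a closed ODE system for the two warping functions and then to read the geometry off its structure. In the adapted orthonormal coframe $\{dt,\;H\eta,\;F\,\pi^\ast\theta^a\}$, with $\{\theta^a\}$ a local orthonormal coframe for $g_\mathbb{N}$ and $2m:=\dim_{\mathbb R}\mathbb{N}$, the hypothesis $d\eta=\pi^\ast\omega_{\mathbb{N}}$ produces the usual O'Neill contributions of the circle bundle, and $\Rc$ is diagonal with radial, vertical and horizontal eigenvalues
\begin{align*}
\Rc_{tt}&=-\frac{H''}{H}-2m\,\frac{F''}{F},\\
\Rc_{vv}&=-\frac{H''}{H}-2m\,\frac{H'F'}{HF}+\frac{m\,H^2}{2F^4},\\
\Rc_{hh}&=-\frac{F''}{F}-\frac{H'F'}{HF}-(2m-1)\frac{(F')^2}{F^2}+\frac{k}{F^2}-\frac{H^2}{2F^4}.
\end{align*}
Since $\He f$ is diagonal with entries $f''$, $\tfrac{H'}{H}f'$, $\tfrac{F'}{F}f'$, equation \eqref{GRS} becomes three scalar ODEs, to which I would adjoin Hamilton's conservation law $\RR+(f')^2\equiv C_0$ and the trace identity $\RR=-\Delta f$. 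Theorem \ref{main1} then supplies the boundary data: placing the unique singular foil at $t=0$ and using completeness so that $t$ ranges over $[0,\infty)$, smooth closing of the metric forces one warping function to vanish. Because a general K\"ahler--Einstein $\mathbb{N}$ cannot collapse smoothly to a lower-dimensional set, it must be the circle fibre that degenerates, so $H(0)=0$, $H'(0)=1$, $F(0)>0$, $F'(0)=0$; moreover $f'(0)=0$ and, as $f$ attains its global maximum at the foil, $f'\le 0$ on $(0,\infty)$.

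Granting this, claim (1) is the easy case. At any interior critical point of $H$ the term $\tfrac{H'}{H}f'$ in the vertical equation drops out, and the equation collapses to $H''=\tfrac{m\,H^3}{2F^4}>0$; hence every critical point of $H$ on $(0,\infty)$ is a strict local minimum. Since $H'(0^+)=1>0$, the function $H$ can never turn around, so $H$ is increasing.

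For claim (2) the analogous computation at a critical point of $F$ gives $F''=\tfrac{k}{F}-\tfrac{H^2}{2F^3}$, whose sign is \emph{not} definite: a local maximum of $F$ is not excluded outright, and this is exactly where I expect the main difficulty to lie. To rule maxima out I would track the normalized ratio $A:=H^2/F^2$, which starts at $A(0)=0$ and obeys the first-order relation $A'=2A\big(\tfrac{H'}{H}-\tfrac{F'}{F}\big)$; combining the difference of the vertical and horizontal equations with the monotonicity of $H$ from claim (1) and with the conservation law should show that the bad region $H^2\ge 2kF^2$, in which a maximum could form, is never entered, so that $F'$ cannot pass from positive to negative. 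This would yield the stated dichotomy: either $F'>0$ throughout, or $F$ first decreases and then increases through a single interior critical point, its global minimum.

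The asymptotic assertion then follows by contradiction once this eventual monotonicity is in hand. If $F$ stayed bounded, then $F\to F_\infty>0$ with $F',F''\to0$, and the horizontal equation would force $H^2\to 2kF_\infty^2$, so $H$ is bounded and $H',H''\to0$ as well; but the vertical equation in this limit leaves the strictly positive term $\tfrac{m\,H^2}{2F^4}\to \tfrac{mk}{F_\infty^2}>0$ unbalanced, forcing $H''$ toward a positive constant and contradicting boundedness of $H$. Hence $F$ is unbounded, and together with the monotonicity from claim (2) this gives $\lim_{t\to\infty}F=\infty$. The delicate step throughout is the exclusion of local maxima of $F$: the radial and vertical equations alone provide no definite sign, and a monotone auxiliary quantity built from $H^2/F^2$ together with the steady conservation law is, I expect, what makes the dichotomy go through.
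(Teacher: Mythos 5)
Your part (1) and the claim $H(0)=0$ are essentially sound and parallel the paper: your critical-point argument for $H$ is Proposition \ref{alphamonotone} in the $t$-variable (the paper works with $\alpha=H^2$, $\beta=F^2$, $ds=H\,dt$), and while your justification of $H(0)=0$ (``a general K\"ahler--Einstein $N$ cannot collapse smoothly'') is softer than the paper's Lemma \ref{alpha0} --- which gets a contradiction from the divergence of the O'Neill term $2mH^2q^2/F^4$ in \eqref{est 1b} using real analyticity of the soliton --- it is morally correct and fixable. The genuine gap is in part (2), and you flagged it yourself: the exclusion of local maxima of $F$ is only conjectured (``should show''), via an invariant-region claim $H^2<2kF^2$ that is never established and is not the mechanism that actually works (for $k\le 0$ your ``bad region'' is everything, yet the theorem still holds). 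The paper's Lemma \ref{betaatmost} proves instead that $\beta$ has \emph{at most one} interior extremum by comparing consecutive extrema: at any critical point $\ddot{\beta}(s_i)=2k/\alpha(s_i)-4q^2/\beta(s_i)$, and if $s_1<s_2$ were consecutive with $\beta(s_1)$ a local maximum, then $\alpha$ strictly increasing (part (1)) and $\beta$ non-increasing on $[s_1,s_2]$ give $\ddot{\beta}(s_2)<\ddot{\beta}(s_1)\le 0$, so $s_2$ would be a second consecutive maximum, which is absurd. A single local maximum is then ruled out only \emph{a posteriori}, because $\beta\to\infty$; no pointwise sign condition on $H^2/F^2$ is needed.

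Your asymptotic argument also has unjustified steps and misses the key use of completeness. From $F$ bounded and eventually monotone you assert $F',F''\to 0$ and then $H',H''\to 0$; a bounded monotone function only gives $\liminf|F'|=0$, and the cross terms $H'F'/(HF)$ and $f'F'/F$ are not controlled by what you have established, so the limiting identities you extract from the horizontal and vertical equations do not follow. The paper's contradiction is of a different nature: if $\beta$ stays bounded, then $\alpha\to\infty$ (Lemma \ref{bfiniteablow}), a Riccati-type inequality for $u=\dot{\alpha}/\alpha$ (Lemma \ref{ricatti1}) forces $u\ge C>0$ for large $s$, hence $\alpha$ grows at least exponentially in $s$, and then
\begin{equation*}
t=\int_0^s \frac{dx}{\sqrt{\alpha(x)}}
\end{equation*}
remains bounded as $s\to\infty$, contradicting completeness via \eqref{ttoinfity}. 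Your sketch never invokes this reparametrization/completeness mechanism, and without it (or a rigorous substitute) the bounded-$F$ scenario is not excluded, so the statement $\lim_{t\to\infty}F=\infty$ is not proved.
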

\begin{remark} In explicit construction of K\"{a}hler steady GRS, $H$ is also observed to be bounded above \cite{dw11, tran23kahler}. 
\end{remark} 

\begin{remark}
	This ansatz is of popular interest. If ${N}=\mathbb{CP}^{n-1}$ and $q=1$ then $P=\mathbb{S}^{2n-1}/\mathbb{Z}_k$ and one recovers the Hopf fibration. As shown in \cite{tran23kahler}, a non-trivial K\"{a}hler GRS of maximal symmetry is constructed by the above ansatz where $(N, g_N)$ is simply-connected of constant holomorphic sectional curvature. Furthermore, there are several efforts to study a Ricci flow on such an ansatz, under various topological and analytical conditions, and determine the corresponding singularity models \cite{appleton2023, di20}.    
\end{remark}



\begin{remark}
	The case $q=0$ corresponds to each level set being a product metric and the manifold a double-warped product. One explicit construction is given in \cite{Ivey94}; also Dancer-Wang study multiple warped products to construct non-K\"{a}hler steady and expanding examples in \cite{dw09, DW09expanding} respectively. 
	The case $q=1$ corresponds to each level set being a deformed Sasakian strucutre. Dancer-Wang \cite{dw11} studies a generalization of this ansatz, with multiple $F$'s and $q$'s, to construct K\"{a}hler GRS. 
\end{remark}

We give various necessary preliminaries in Section \ref{Section 2, Preliminaries}, prove Theorems \ref{main1}-\ref{main2} in Section \ref{Section 3, Singular Foils}, and prove Theorem \ref{main3} in Section \ref{Section 4, Estimates}. 

\section{Preliminaries}\label{Section 2, Preliminaries}

\subsection{Isoparametric Functions}
Let $(M, g)$ be a Riemannian manifold and $f: M\mapsto \mathbb{R}$ be a smooth function.
\begin{definition}\label{Definition 2.1, draft}
	$f$ is called \textit{transnormal} if there is a smooth function $b: \mathbb{R}\mapsto \mathbb{R}$ such that $|\nabla f|^2= b(f).$ A transnormal function $f$ is called \textit{isoparametric} if there is a continuous function $a:\mathbb{R}\mapsto \mathbb{R}$ such that $\Delta f= a(f).$ A component of a level set is called a foil if it has codimension one and singular foil (focal submanifold) if it has codimension bigger than one. 
\end{definition}

By Gauss lemma, a Riemannian manifold with a transnormal function is locally foliated by equidistant regular and singular foils. The isoparametric condition further implies that each regular foil has constant mean curvature. A transnormal function induces a transnormal system so that any geodesic of $M$ cuts the foils perpendicularly at none or all of its points \cite{bolton73}. In particular, the exponential map of the normal bundle to a foil will generate geodesics which cut the foils perpendicularly at all of its points. If $\gamma(t)$ is such a geodesic with parametrized arc length, we have $\gamma'(t)=\pm\frac{\nabla f}{|\nabla f|}$ whenever $|\nabla f|\neq 0$. They are called $f$-segments accordingly. There is a classification of complete connected Riemannian manifolds with a transnormal function up to diffeomorphism \cite[Theorem 1.1]{miyaoka13}.



\begin{theorem}
	\label{structuretrans}
	Let $M$ be a complete connected Riemannian manifold which admits a transnormal function $f$. Then either one of the followings holds:
	\begin{enumerate}
		\item $M$ is diffeomorphic to a vector bundle over a submanifold $Q$ of $M$.
		\item $M$ is diffeomorphic to a union of two disk bundles over two submanifolds $Q$ and $Q'$ of $M$ where $Q$ and/or $Q'$ might be hypersurface(s). 
	\end{enumerate}
\end{theorem}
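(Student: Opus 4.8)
The plan is to read off the global topology of $M$ from the one–variable function $b$ together with the normal geodesic flow of the transnormal foliation. The most useful initial reduction is that $|\nabla f|^2 = b(f)$ is \emph{constant on each level set}, so every level set $f^{-1}(c)$ is either entirely regular (when $b(c)>0$) or entirely critical (when $b(c)=0$). Hence the critical values of $f$ are exactly the zeros of $b$ inside the interval $J := f(M)$, and analyzing $J$ is the central task.

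First I would show that a non-constant $f$ has no critical value in the interior of $J$. Suppose $c \in \mathrm{int}(J)$ with $b(c)=0$. Since $b = |\nabla f|^2 \ge 0$ vanishes at an interior point, $c$ is a local minimum of $b$, so $b'(c)=0$ and $b(s) \le C(s-c)^2$ near $c$. For any unit-speed geodesic $\gamma$ through a critical point $x_0 \in f^{-1}(c)$, the function $h(t):=f(\gamma(t))$ satisfies $h(0)=c$ and $|h'(t)|^2 = \langle \nabla f, \gamma'\rangle^2 \le |\nabla f|^2 = b(h(t)) \le C(h(t)-c)^2$; Gronwall then forces $h \equiv c$. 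Because the exponential map at $x_0$ is a local diffeomorphism, $f \equiv c$ on a neighbourhood, so $\{f=c\}$ is open and closed and $f$ is constant, a contradiction. Therefore $b>0$ on $\mathrm{int}(J)$, the set $f^{-1}(\mathrm{int}(J))$ is foliated by regular foils, and any critical value must be an attained endpoint of $J$, where $\nabla f = 0$ and hence $b=0$.

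Next I would use the normal geodesic flow to build the claimed bundle structure, splitting into cases according to which endpoints of $J=[c_-,c_+]$ are attained. Fixing a regular foil $L=f^{-1}(c_0)$ and flowing along $f$-segments, completeness guarantees the geodesics extend for all time, while the ODE $\dot f = \sqrt{b(f)}$ governs the $f$-value; a simple zero of $b$ at an attained endpoint is reached in finite length (the integral $\int^{c_\pm} b^{-1/2}$ converges), producing a focal submanifold there, whereas a non-attained endpoint forces the $f$-segments to have infinite length. When both endpoints are attained one gets focal submanifolds $Q=f^{-1}(c_-)$ and $Q'=f^{-1}(c_+)$, and the normal exponential maps identify $f^{-1}([c_-,c_0])$ and $f^{-1}([c_0,c_+])$ with the normal disk bundles of $Q$ and $Q'$, glued along $L$; this is case (2). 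When exactly one endpoint is attained, say $c_-$, the only critical value is $c_-$, so the $f$-segments never refocus and the normal exponential map of $Q=f^{-1}(c_-)$ is a diffeomorphism onto $M$, exhibiting $M$ as the total space of the normal vector bundle of $Q$; this is case (1). When neither endpoint is attained there are no critical points, the flow of $\nabla f/|\nabla f|$ is complete, and $M \cong \mathbb{R}\times L$ is the trivial line bundle over a regular foil, again case (1).

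The main obstacle I anticipate is the analysis near an attained endpoint: one must show that the critical level set $Q=f^{-1}(c_\pm)$ is a smooth (possibly disconnected) submanifold and that a collar of it is diffeomorphic to the normal disk bundle. This requires controlling the focusing of the $f$-segments as $f \to c_\pm$, i.e.\ verifying that the focal distance is finite and that the normal exponential map is a diffeomorphism off the zero section, and then checking that the two pieces glue smoothly along the common regular foil $L$. The transnormality (Gauss lemma and equidistance of foils) together with completeness are exactly what allow these local models to patch into a global diffeomorphism; the low-dimensional situations, where a focal submanifold may itself be a hypersurface so that the ``disk bundle'' has one-dimensional fibre, account for the ``and/or hypersurfaces'' clause in the statement.
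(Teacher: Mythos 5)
Your outline follows the same strategy as the proof this paper actually relies on: the paper does not prove Theorem \ref{structuretrans} itself but quotes it from Miyaoka \cite[Theorem 1.1]{miyaoka13}, which completes the classical argument of Wang \cite{wang87iso} for transnormal functions. Your first step is correct and essentially identical to the first lemma of that argument: since $|\nabla f|^{2}=b(f)$, every level set is entirely regular or entirely critical; and if $b$ vanished at an interior value $c$ of $J=f(M)$, then $b\geq 0$ near $c$ forces $b'(c)=0$, hence the Taylor bound $b(s)\leq C(s-c)^{2}$, and your Gronwall argument along geodesics shows $f^{-1}(c)$ is open as well as closed, so $f$ is constant -- a contradiction. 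Note that the same argument, applied at an \emph{attained} endpoint $c_{\pm}$, is what shows the zero of $b$ there must be simple; you use this (via convergence of $\int^{c_{\pm}} b^{-1/2}$) but never justify it, so you should make that one-line extension explicit.

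The genuine gap is in the second half, and you flag it yourself without closing it: you never prove that an attained critical level $Q=f^{-1}(c_{-})$ is a smooth submanifold, nor that the normal exponential map of $Q$ is a diffeomorphism from its normal vector (resp.\ disk) bundle onto the corresponding piece of $M$. This cannot be waved through. Since $Q$ is a critical level, the implicit function theorem gives nothing, and a priori $Q$ could fail to be a manifold at all; proving its smoothness, and then that (i) $\exp^{\perp}$ is injective off the zero section (this part does follow from uniqueness of $f$-segments through regular points), (ii) $Q$ has no focal points in the swept region, so $\exp^{\perp}$ is a local diffeomorphism, and (iii) the two closed tubes glue smoothly along the common regular foil $L$, is precisely the technical core of Wang's paper and the part Miyaoka had to rework; it occupies most of both papers. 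As written, your proposal correctly reduces the theorem to its hardest step and then stops, so it is a proof skeleton consistent with the cited literature rather than a complete proof.
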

\begin{remark}
	In particular, a Riemannian manifold with a transnormal function admits at most two singular foils. Each corresponds to a global minimum or maximum of the function. 
\end{remark}


\subsection{Curvature Computation}
\label{curvaturecomp}

Let $(M, g, f)$ be a Riemannian manifold with metric $g$ and a transnormal function $f$. Wherever $\nabla f\neq \vec{0}$, nearby level sets of $f$ are diffeomorphic to each other via $f$-segments. Thus, the manifold locally is diffeomorphic to $P\times I$ where $P$ is a regular level set and $I$ is an open interval. Furthermore, we let $g_t$ to be the restriction of $g$ to $P\times \{t\}$ for $t\in I$. The transnormality of $f$ implies that $g$ then takes the simple form:
\begin{align}
	\label{bundlelikemtric}
	g &= dt^2 + g_t.
\end{align}
Via the identification of tangent bundles, $g_t$ can be considered as a one-parameter family of metrics on $P$. 

Following \cite[Remark 2.18]{dw11}, we recall the shape operator $L(X):= \nabla_X \nu$ for $\nu:=\frac{\partial}{\partial t}$. One observes that
\[\partial_t g= 2g_t\circ L_t .\] 
Thus the Ricci curvature of $(M, {g})$ is totally determined by the geometry of the shape operator and how it evolves. Precisely, Gauss, Codazzi, and Riccati equations imply the following computation,  for tangential vector fields $X, Y$ of $P$, 
\begin{align*}
	\Rc(X, Y) &=\Rc_t(X, Y)-\tr(L)g_t(LX, Y)-g_t(\dot{L}(X), Y),\\
	\Rc(X, \nu) &=-\nabla_X \tr(L_t)-g_t(\delta L, X),\\ 
	\Rc(\nu, \nu) &= -\text{tr}(\dot{L})-\text{tr}(L^2).
\end{align*}
Here $\delta L=\sum_{i}\nabla_{e_i}L(e_i)$ for an orthonormal basis $\{e_{i}\}_{i}$ and $\tr{T}=\tr_{g_t}T_t$. Furthermore,   
\begin{align*}
	\He{f}(X, \nu) &=0,\\
	\He{f}(\nu, \nu)&= f'',\\
	\He{f}(X, Y)&= f' g_t(L_t(X), Y). 
\end{align*}

Next, we consider a GRS $(M, g, f)$ with $\He f+ \Rc=\lambda g$. The following identities are well-known \cite{chow2023}, for $\SS$ denoting the scalar curvature,
\begin{align}
	\label{deltaS}
	\SS + \triangle f &= n\lambda;\\
	\label{rcandf}
	\Rc(\nabla{f})&=\frac{1}{2}\nabla{\SS};\\
	\label{nablafandS}
	\SS+|\nabla f|^2-2\lambda f &= C_1;\\
	\label{lapS}
	\triangle\SS+2|\text{Rc}|^2 &= \left\langle{\nabla f,\nabla \SS}\right\rangle+2\lambda \SS.
\end{align}
If the potential function $f$ is transnormal, then \eqref{GRS} is reduced to a system:
\begin{align}
	0 &=-(\delta L)-\nabla \tr{L},\nonumber \\
	\label{reducedsolitionsystem}
	\lambda &=-\text{tr}(\dot{L})-\text{tr}(L^2)+f'',\\
	\lambda g(X, Y) &=\Rc(X, Y)-(\tr{L}) g(L X, Y)-g(\dot{L}(X), Y) +f' g(LX, Y). \nonumber
\end{align}
Furthermore, (\ref{deltaS}) becomes:
\begin{align}
	\label{scalar1}
	{\SS} &=\lambda n-{\Delta} f=\lambda n- f''-f'\tr{L}
\end{align} 
As observed by Dancer-Wang \cite{dw11}, it is useful to consider the conservation law:
\begin{equation}\label{conservationlaw}
	f''+(\tr{L}) f'-(f')^2+2\lambda f=C=\lambda n-C_1.
\end{equation}

At a singular foil $Q$, let $NQ$ be the normal bundle of dimension $k>1$. At $Q$, one no longer has  equation (\ref{bundlelikemtric}) and approaching $Q$,  $\tr{L}$ diverges toward $\pm \infty$. However, by continuity, $f''$ is still well-defined and, indeed, for any unit vector $V\in NQ$, $f''= \He f(V, V).$ Thus, (\ref{scalar1}) is adjusted as follows:
\begin{align}
	\label{singularscalar}
	\SS &= \lambda n- k f''= C_1+2\lambda f.	
\end{align}

\section{The Existence of Singular Foils}\label{Section 3, Singular Foils}
In this section, we consider a GRS with a potential transnormal function $f$ and show that, if $\lambda= 0$, there must be a singular foil. The shrinking case is relatively delicate and we only obtain a partial result. We start with a few general observations. 
\begin{lemma}
	\label{noentirefunction}
	A $C^1$-function $u:\mathbb{R}\rightarrow \mathbb{R}$ such that $u'\leq -c u^2$ for some positive constant $c$ must be equivalently zero. 
\end{lemma}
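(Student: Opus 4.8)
The plan is to exploit the Riccati structure of the differential inequality via the reciprocal substitution $v = 1/u$, combined with the monotonicity that the hypothesis automatically forces. The underlying principle is that the model equation $u' = -c u^2$ has solutions $u(t) = \frac{1}{c(t - t_\ast)}$ that blow up in finite time, and a solution of the \emph{inequality} $u' \le -c u^2$ can only do worse; since our $u$ is assumed finite on all of $\mathbb{R}$, no such blow-up is permitted, which should force $u \equiv 0$.

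First I would observe that because $u^2 \ge 0$, the hypothesis gives $u' \le -c u^2 \le 0$, so $u$ is non-increasing on all of $\mathbb{R}$. This monotonicity is the key structural fact: if $u$ takes a positive value at some $t_0$, it stays positive for all $t \le t_0$, and if it takes a negative value at some $t_1$, it stays negative for all $t \ge t_1$. This is precisely what keeps $u$ of a single sign on an appropriate half-line and legitimizes dividing by $u^2$.

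Next I would argue by contradiction in two symmetric cases. Suppose $u(t_0) = a > 0$. Then $u \ge a > 0$ on $(-\infty, t_0]$, so $v := 1/u$ is a well-defined positive $C^1$ function there, and the chain rule with the hypothesis gives $v' = -u'/u^2 \ge c$. Integrating $v' \ge c$ from $t$ to $t_0$ yields $v(t) \le \frac{1}{a} - c(t_0 - t)$, which becomes negative once $t < t_0 - \frac{1}{ca}$, contradicting $v > 0$. The case $u(t_1) = b < 0$ is identical but integrated forward: $u \le b < 0$ on $[t_1, \infty)$, the function $v = 1/u$ is negative yet still satisfies $v' \ge c$, and integrating from $t_1$ to $t$ forces $v(t) \ge \frac{1}{b} + c(t - t_1)$, which is positive for $t$ large, again contradicting $v < 0$.

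Since $u$ can be neither positive nor negative at any point, it must vanish identically. I do not expect a genuine obstacle here; the only point requiring care is to integrate in the direction that keeps $u$ of constant sign — backward from a positive value, forward from a negative one — so that the substitution $v = 1/u$ remains valid and the finite-time sign change of $v$, which is the hallmark of the Riccati blow-up, directly contradicts the assumption that $u$ is defined and finite on all of $\mathbb{R}$.
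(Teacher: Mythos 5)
Your proof is correct and follows essentially the same route as the paper's: both arguments integrate the reciprocal inequality (the paper writes this as $\int u'/u^2$, you as $v' \ge c$ for $v = 1/u$) in the sign-preserving direction — backward from a positive value, forward from a negative one — and reach a contradiction within an interval of length $\tfrac{1}{c|u(a)|}$. The only cosmetic difference is that the paper phrases the contradiction as finite-time blow-up of $u$, while you phrase it as a forced sign change of $v$; these are the same obstruction.
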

\begin{proof}
	Suppose $u(a)\neq 0$ for some $a\in \mathbb{R}$. If $u(a) > 0$, then $u(b) > 0$ for any $b< a$ due to the hypothesis so that 
	\begin{align*}
	\int_{b}^a \frac{u'(x)}{u^2(x)} dx&=-\frac{1}{u(a)}+\frac{1}{u(b)}
	\end{align*}
is well-defined. For $b \in (a - \frac{1}{C u(a)}, a)$, the hypothesis shows that $u(b)\geq \frac{1}{\frac{1}{u(a)}-c(a-b)}$. This expression is approaching positive infinity as $b\searrow a-\frac{1}{cu(a)}$, a contradiction. If  $u(a)<0$, then $u(b) < 0$ for any $b>a$ so that 
\begin{align*}
\int_{a}^{b} \frac{u'(x)}{u^{2}(x)} dx = \int_{a}^{b} \frac{d}{dx} (-u^{-1})(x) dx = -\frac{1}{u(b)} + \frac{1}{u(a)} 
\end{align*}
is well-defined. For $b \in (a, a- \frac{1}{C u(a)}, a)$, the hypothesis shows that $u(b)\leq \frac{1}{\frac{1}{u(a)}+c(b-a)}$. This expression is approaching negative infinity as $b\nearrow a-\frac{1}{cu(a)}$, a contradiction. Thus, $u\equiv 0$. 
\end{proof}

\begin{lemma}
	\label{f'decreasing}
	If $f''\leq \lambda$ on $\mathbb{R}$, then $f''\equiv \lambda$ and $\SS=\lambda (n-1)$. 
\end{lemma}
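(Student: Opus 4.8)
The plan is to apply Lemma \ref{noentirefunction} to the mean curvature $m := \tr{L}$ of the regular level sets. First I would read the hypothesis ``$f''\le \lambda$ on $\mathbb{R}$'' as asserting that $f$ has no singular foil, so that the product structure \eqref{bundlelikemtric} and hence the smooth shape operator $L$ are available for every $t\in\mathbb{R}$; in particular $m=\tr{L}$ is a $C^1$ (indeed smooth) function on the whole line. This global regularity is exactly what is needed to feed $m$ into Lemma \ref{noentirefunction}.

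The next step is to extract a Riccati inequality. The second equation of the reduced system \eqref{reducedsolitionsystem} reads $\lambda=-\tr(\dot{L})-\tr(L^2)+f''$, which I rewrite as
\[
\dot{m}+\tr(L^2)=f''-\lambda,
\]
using $\tr(\dot{L})=\tfrac{d}{dt}\tr{L}=\dot{m}$ (tracing a $(1,1)$-tensor is metric-independent and commutes with $\nabla_\nu$). Since $L_t$ acts on the $(n-1)$-dimensional level set, Cauchy--Schwarz gives $\tr(L^2)\ge \tfrac{1}{n-1}(\tr{L})^2=\tfrac{m^2}{n-1}$. Combining this with the standing assumption $f''-\lambda\le 0$ yields
\[
\dot{m}\le -\frac{1}{n-1}\,m^2 .
\]

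I would then invoke Lemma \ref{noentirefunction} with $u=m$ and $c=\tfrac{1}{n-1}$ to conclude $m\equiv 0$. Feeding this back into the identity above, $\dot{m}\equiv 0$ forces $f''-\lambda=\tr(L^2)\ge 0$; together with $f''\le\lambda$ this pins down $f''\equiv\lambda$ (and incidentally $\tr(L^2)\equiv 0$, so $L\equiv 0$). Finally, substituting $f''=\lambda$ and $\tr{L}=0$ into \eqref{scalar1} gives $\SS=\lambda n-f''-f'\tr{L}=\lambda(n-1)$, which is the second assertion.

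I expect the main obstacle to be bookkeeping rather than computation. The delicate point is justifying that $m=\tr{L}$ is globally defined and $C^1$ on all of $\mathbb{R}$ before applying Lemma \ref{noentirefunction}: this is precisely where the interpretation of the hypothesis as the absence of a singular foil is essential, since approaching a singular foil one has $\tr{L}\to\pm\infty$ and the argument would break down. A secondary point to verify carefully is the identification $\tr(\dot{L})=\dot{m}$ in the presence of the $t$-dependent metric $g_t$, which holds for the reason noted above. Once these are in place, the deduction is immediate.
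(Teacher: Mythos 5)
Your proof is correct and follows essentially the same route as the paper: rewrite the second equation of \eqref{reducedsolitionsystem} as a Riccati inequality for $\tr{L}$ via Cauchy--Schwarz, invoke Lemma \ref{noentirefunction} to get $\tr{L}\equiv 0$, feed this back to force $f''\equiv\lambda$, and conclude $\SS=\lambda(n-1)$ from \eqref{scalar1}. The only cosmetic difference is your sharper constant $\tfrac{1}{n-1}$ in place of the paper's $\tfrac{1}{n}$, which is immaterial since Lemma \ref{noentirefunction} needs only some positive constant.
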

\begin{proof}
The equation (\ref{reducedsolitionsystem}) and our assumption imply that
\begin{equation}\label{est 1 draft}
\ddt \tr(L)= \tr{\ddt{L}}= f''-\tr(L^2)-\lambda \leq -\tr(L^2).
\end{equation}
Since $L$, representing the shape operator, is symmetric, we apply the Cauchy-Schwarz inequality to obtain:
\[\ddt \tr(L) \leq -\frac{1}{n}(\tr(L))^2.\]
Thus, Lemma \ref{noentirefunction} is applicable and implies $\tr(L)\equiv 0$. Consequently, $\frac{d}{dt} \tr(L) \equiv 0$; applying this to \eqref{est 1 draft} shows $f''-\lambda = \tr (L^{2}) \geq 0$; thus,  $f''\equiv \lambda$ . The claim that $\SS=\lambda (n-1)$ now follows from \eqref{scalar1}. 
\end{proof}

\begin{lemma}\label{casesf'}
If $\lambda\leq 0$ and $\SS>0$, then $f'$ is either strictly monotonic or $f'$ has a unique critical point. In the latter case, either $f'>0$ and the critical point is a global minimum or $f'<0$ and the critical point is a global maximum.  
\end{lemma}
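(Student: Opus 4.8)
The plan is to treat $f'$ as a one–variable function of the arclength parameter $t$ along the $f$-segments and to control the sign changes of $f''$. On the regular part one has $(f')^2=|\nabla f|^2\neq 0$, so $f'$ never vanishes and keeps a constant sign there; this is precisely what singles out the two alternatives ``$f'>0$'' and ``$f'<0$'' in the statement. The remaining content is purely about $f''$: strict monotonicity of $f'$ means $f''$ has no interior zero, while a unique critical point of $f'$ means $f''$ changes sign exactly once. So it suffices to show that every critical point of $f'$ (a point $t_0$ with $f''(t_0)=0$) is nondegenerate with $\operatorname{sign}f'''(t_0)=\operatorname{sign}f'$, and then invoke the elementary fact that a smooth function all of whose critical points are strict local minima (resp.\ maxima) has at most one of them.

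First I would turn the conservation identity \eqref{nablafandS} into $\SS=C_1+2\lambda f-(f')^2$, which exhibits $\SS$ as a function of $t$ alone, and differentiate twice to get $\SS'=2f'(\lambda-f'')$ and $\SS''=2f''(\lambda-f'')-2f'f'''$. In parallel I would rewrite the soliton Bochner identity \eqref{lapS} in the radial variable, using $\Delta u=u''+(\tr L)\,u'$ and $\langle\nabla f,\nabla\SS\rangle=f'\SS'$ for functions of $t$, obtaining $\SS''=(f'-\tr L)\,\SS'+2\lambda\SS-2|\Rc|^2$.

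The heart of the proof is to evaluate both expressions for $\SS''$ at a critical point $t_0$ of $f'$. Setting $f''(t_0)=0$ and equating eliminates the $f''$-terms and yields $-2f'f'''=2\lambda f'(f'-\tr L)+2\lambda\SS-2|\Rc|^2$. The term $\lambda f'\tr L$ has no a priori sign, so the decisive step is to remove $\tr L$ via \eqref{scalar1}: at $t_0$ one has $f'\tr L=\lambda n-\SS$, and substituting this collapses the whole right-hand side into
\[
f'f''' = |\Rc|^2 - 2\lambda\SS - \lambda (f')^2 + \lambda^2 n .
\]
Since $\lambda\le 0$ and $\SS>0$, every summand on the right is nonnegative and $|\Rc|^2>0$ (because $\SS>0$ forces $\Rc\neq 0$), so $f'f'''>0$ at $t_0$. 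I expect this algebraic collapse — recognizing that the indefinite contribution $\lambda f'\tr L$ is exactly what \eqref{scalar1} converts into manifestly nonnegative pieces — to be the main obstacle; the rest is bookkeeping.

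From $f'f'''(t_0)>0$ we read off $\operatorname{sign}f'''(t_0)=\operatorname{sign}f'$, so each critical point of $f'$ is nondegenerate and is a strict local minimum when $f'>0$ and a strict local maximum when $f'<0$. Finally, a smooth function whose critical points are all strict local minima can have at most one: between two such minima it would attain an interior maximum, itself a critical point and hence (by the previous sentence) a strict local minimum, which is impossible; the same argument applies to maxima. Therefore $f'$ has at most one critical point, which gives strict monotonicity when there is none and, when there is exactly one, the asserted global minimum (if $f'>0$) or global maximum (if $f'<0$).
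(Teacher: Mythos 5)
Your proof is correct, and it reaches the paper's key nondegeneracy statement by a genuinely different computation. The paper works entirely inside the reduced ODE system: it differentiates the conservation law \eqref{conservationlaw}, substitutes the Riccati-type equation for $\tr(\dot{L})$ from \eqref{reducedsolitionsystem}, and obtains $f'''(a)=(\tr(L^2)-\lambda)f'(a)$ at a critical point $a$ of $f'$; strict positivity of the factor $\tr(L^2)-\lambda$ then comes from \eqref{scalar1}, which forces $f'(a)\neq 0\neq \tr(L)(a)$ and hence $\tr(L^2)(a)>0$ by symmetry of $L$. You instead differentiate the first integral \eqref{nablafandS} twice, compare with the radially reduced Bochner identity \eqref{lapS} (using $\Delta u=u''+(\tr L)u'$ for radial $u$), and use \eqref{scalar1} only to eliminate $\tr L$, arriving at
\[
f'f'''(t_0) \;=\; |\Rc|^2-2\lambda\SS-\lambda (f')^2+\lambda^2 n \;>\;0 .
\]
The two formulas are literally equivalent: writing $\Rc=\lambda g-\He f$ and using $f''(t_0)=0$ together with \eqref{deltaS} gives $|\Rc|^2=(f')^2\tr(L^2)+2\lambda\SS-\lambda^2 n$ at $t_0$, so your right-hand side collapses to $(\tr(L^2)-\lambda)(f')^2$. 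What your route buys: the positivity is manifest term by term, and in particular the formula automatically yields $f'(t_0)\neq 0$ and $f'''(t_0)\neq 0$, which the paper extracts as a separate step; also, no evolution equation for the shape operator is needed. What the paper's route buys: it never leaves the system \eqref{reducedsolitionsystem}--\eqref{conservationlaw}, so it avoids invoking \eqref{lapS} and the reduction of the Laplacian along the foliation.

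One caveat on your opening remark that $f'$ ``never vanishes and keeps a constant sign'' on the regular part: the paper later invokes this lemma (in Proposition \ref{criticalimpliessingular}) on a domain equal to all of $\mathbb{R}$ that crosses a codimension-one critical level, where $f'$ does vanish, so constant sign of $f'$ should not be assumed a priori. This does not damage your argument: your key formula shows that zeros of $f''$ and zeros of $f'$ can never coincide, and the remaining mixed configurations (e.g., a strict local minimum of $f'$ with $f'>0$ coexisting with a zero of $f'$ or with a strict local maximum where $f'<0$) are ruled out by the same interior-extremum/intermediate-value reasoning you already use for two minima. But that case should be addressed explicitly, since it is exactly the situation in which the paper applies the dichotomy to conclude strict monotonicity.
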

\begin{proof}
	Differentiating (\ref{conservationlaw}) yields
\begin{equation}\label{est 2, draft}
f'''+(\tr(L))'f'+\tr(L)f''-2f'f''+2\lambda f'=0. 
\end{equation} 	
Suppose $f'$ has a critical point at $a$ then $f''(a)=0$ and, using \eqref{reducedsolitionsystem}, we have at point $a$,
\begin{align*} f'''=-(\tr(L))'f'-2\lambda f' =(\lambda+\tr(L^2)-f''-2\lambda) f' =(\tr(L^2)-\lambda)f'.
\end{align*}
Since $\SS>0$ and $\lambda\leq 0$ by hypothesis, \eqref{scalar1} and $f''(a) = 0$ imply that 
\[f'(a)\neq 0 \neq \tr(L)(a).\]
Since $L$ is symmetric, $\tr(L)(a)\neq 0$ implies $0<\tr(L^2)(a)$. Therefore, $0<\tr(L^2)-\lambda$ as $\lambda\leq 0$ which implies that $f'''(a) \neq 0$. Consequently, the critical point of $f'$ is either a global minimum (and $f'>0$ everywhere on its domain) or a global maximum (and $f'<0$ everywhere on its domain).    
\end{proof}


\subsection{The Steady Case}
\begin{lemma}\label{f''monotonic}
If $\lambda= 0$, $f'<0$, and $f''>0$ on an interval $I$, then $f''$ is strictly decreasing on $I$.
\end{lemma}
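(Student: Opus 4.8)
The plan is to reduce the claim to the pointwise inequality $f'''<0$ on $I$, which suffices for strict monotonicity of $f''$. Since $\lambda=0$, I would start from the differentiated conservation law \eqref{est 2, draft}, which in the steady case reduces to
\[
f''' = -(\tr L)'\,f' - (\tr L)\,f'' + 2 f' f''.
\]
To remove the mean-curvature derivative $(\tr L)'$, I would use the second line of the reduced system \eqref{reducedsolitionsystem}, which for $\lambda=0$ gives $(\tr L)' = \tr(\dot L) = f'' - \tr(L^2)$. Substituting this in and collecting terms produces the compact identity
\[
f''' = f' f'' + f' \tr(L^2) - (\tr L)\, f''.
\]

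With the standing hypotheses $f'<0$ and $f''>0$ on $I$, the first term $f'f''$ is strictly negative, and since $L$ is symmetric we have $\tr(L^2)\ge 0$, so the middle term $f'\tr(L^2)$ is nonpositive. Everything therefore hinges on the sign of the last term, i.e.\ on showing $\tr L>0$ on $I$; this is the heart of the argument. To establish it I would appeal to the scalar-curvature relation \eqref{scalar1}, which for $\lambda=0$ reads $\SS=-f''-f'\tr L$, equivalently $f'\tr L=-(f''+\SS)$. Because the scalar curvature of a complete steady GRS is nonnegative and $f''>0$, the right-hand side is strictly negative; dividing by $f'<0$ yields $\tr L>0$. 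Then $-(\tr L)f''<0$, all three terms are negative, and $f'''<0$, giving the claim.

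The one substantive input is the nonnegativity $\SS\ge 0$, and I expect this to be the main obstacle: without it the sign of $\tr L$, and hence of $-(\tr L)f''$, is undetermined and the monotonicity could genuinely fail. I would handle it by invoking the standard fact that a complete steady gradient Ricci soliton has nonnegative scalar curvature; note that $\SS\ge 0$ (rather than strict positivity) already suffices here, since $f''>0$ forces $f''+\SS>0$. Everything else is a routine substitution among the conservation law \eqref{conservationlaw}, the reduced system \eqref{reducedsolitionsystem}, and \eqref{scalar1}, requiring no further curvature estimates.
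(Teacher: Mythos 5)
Your proof is correct and follows essentially the same route as the paper's: both substitute the reduced soliton system \eqref{reducedsolitionsystem} into the differentiated conservation law \eqref{est 2, draft} to obtain the identity $f'''=f'f''+f'\tr(L^2)-(\tr L)f''$, and both extract $\tr L>0$ from \eqref{scalar1} together with nonnegative scalar curvature of a complete steady soliton, concluding $f'''<0$. Your observation that $\SS\ge 0$ (rather than the strict positivity the paper invokes via non-triviality) already suffices once $f''>0$ is a minor but valid refinement, not a different argument.
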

\begin{proof}
The equation (\ref{scalar1}) and $\SS>0$ imply that $\tr(L)>0$ on $I$. Applying (\ref{reducedsolitionsystem}) to \eqref{est 2, draft} yields 
\begin{align*}
0 =f''' -\tr(L^2)f'+(\tr(L)-f')f''.
\end{align*} 
Thus, $f'''<0$ on $I$.	
\end{proof}

\begin{proposition}\label{existencecritical}
Let $(M, g, f)$ be a non-trivial complete steady GRS with a transnormal potential function $f$. Then, there is a point $x\in M$ such that $\nabla f\mid_x =\vec{0}$.   
\end{proposition}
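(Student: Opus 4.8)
The plan is to argue by contradiction: assume $\nabla f\neq\vec 0$ everywhere on $M$ and derive a violation of the boundedness forced by the conservation law. First I would reduce the statement to an ODE on all of $\mathbb{R}$. If $f$ has no critical point, then $M$ is globally foliated by regular foils and, by completeness, every $f$-segment $\gamma$ extends to a geodesic $\gamma:\mathbb{R}\to M$. Writing $f=f(t)$ along $\gamma$, all the reduced identities \eqref{reducedsolitionsystem}, \eqref{scalar1} and \eqref{conservationlaw} hold for every $t\in\mathbb{R}$, and there $f'(t)=\pm|\nabla f|\neq 0$; since $f'$ is continuous and nonvanishing on the connected line it has a fixed sign, and after reversing the orientation of $t$ (equivalently choosing $\nu=-\nabla f/|\nabla f|$, which flips $f'$ and $L$ consistently and preserves the form of all reduced equations) I may assume $f'<0$ on all of $\mathbb{R}$.

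Next I would record two consequences of non-triviality. For a non-trivial complete steady GRS one has $\SS>0$ everywhere: indeed $\SS\geq 0$ is known, and if $\SS$ vanished at a point then the drift inequality $\Delta\SS-\langle\nabla f,\nabla\SS\rangle=-2|\Rc|^2\leq 0$ coming from \eqref{lapS} together with the strong maximum principle would force $\SS\equiv 0$, hence $\Rc\equiv 0$ and $\He f\equiv 0$, i.e. a rigid (trivial) soliton. With $\SS>0$, equation \eqref{nablafandS} in the steady case gives $(f')^2=C_1-\SS<C_1$, so $C_1>0$ and $-\sqrt{C_1}<f'<0$; in particular $f'$ is bounded. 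Moreover the contrapositive of Lemma \ref{f'decreasing} shows that $f''\leq 0$ on all of $\mathbb{R}$ is impossible, so $f''(t_0)>0$ for some $t_0$.

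The core of the argument is an interval analysis. Let $I$ be the connected component of $\{f''>0\}$ containing $t_0$. On $I$ we have $f'<0$ and $f''>0$, so Lemma \ref{f''monotonic} applies and $f''$ is strictly decreasing on $I$. I claim $I$ is a left-unbounded ray: if its left endpoint $\alpha$ were finite then $f''(\alpha)=0$ by maximality, yet strict monotonicity forces $\lim_{t\to\alpha^+}f''(t)=\sup_I f''\geq f''(t_0)>0$, contradicting continuity at $\alpha$. Hence $I=(-\infty,\beta)$, and therefore $f''(t)\geq f''(t_0)=:c>0$ for all $t\leq t_0$. Integrating gives $f'(t)\leq f'(t_0)-c\,(t_0-t)\to-\infty$ as $t\to-\infty$, which contradicts the bound $f'>-\sqrt{C_1}$. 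Thus the assumption fails and $\nabla f$ must vanish somewhere.

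I would expect the main obstacle to be the global positivity $\SS>0$, since it is exactly what simultaneously licenses the monotonicity Lemma \ref{f''monotonic} on $I$ and supplies the lower bound on $f'$ that the final integration contradicts; establishing it cleanly requires invoking nonnegativity of the scalar curvature for steady solitons together with the maximum-principle rigidity above. Once those two inputs are secured, the remaining interval bookkeeping and the integration step are elementary.
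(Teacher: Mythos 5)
Your proof is correct, and its engine coincides with the paper's: reduce by contradiction to an ODE on all of $\mathbb{R}$ with $f'<0$, establish $\SS>0$, extract boundedness of $f'$ from \eqref{nablafandS}, and then contradict it by producing a left-unbounded ray on which $f''$ is bounded below by a positive constant. The structural difference lies in how that ray is produced. The paper first proves Lemma \ref{casesf'} and splits into three cases: $f''<0$ everywhere (killed by Lemma \ref{f'decreasing}), $f''>0$ everywhere, or $f'$ has a unique critical point which is a global maximum; in the last two cases $f''>0$ on some interval $(-\infty,a)$, where Lemma \ref{f''monotonic} plus a convergence argument concludes. You bypass Lemma \ref{casesf'} altogether: the contrapositive of Lemma \ref{f'decreasing} yields one point $t_0$ with $f''(t_0)>0$, and the connected component $I$ of $\{f''>0\}$ containing $t_0$ must be left-unbounded, since a finite left endpoint $\alpha$ would have $f''(\alpha)=0$ while the strict decrease of $f''$ given by Lemma \ref{f''monotonic} forces $\lim_{t\to\alpha^+}f''(t)\geq f''(t_0)>0$. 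This is a genuine streamlining: one fewer lemma, no case analysis, and the direct integration $f'(t)\leq f'(t_0)-c\,(t_0-t)$ replaces the paper's limit argument; what the paper's route buys instead is Lemma \ref{casesf'} itself, which is reused later in the proof of Proposition \ref{criticalimpliessingular}. Two minor points: your maximum-principle derivation of $\SS>0$ from \eqref{lapS} is the standard argument behind the result the paper simply cites from \cite{zhang09completeness}, and identifying the outcome $\Rc\equiv 0$, $\He f\equiv 0$ as ``trivial'' matches that convention; also, when you invoke Lemma \ref{f''monotonic} on $I$, note that its proof tacitly uses $\SS>0$ (to obtain $\tr L>0$ from \eqref{scalar1}), so establishing $\SS>0$ beforehand --- as you do --- is exactly what licenses its use.
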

\begin{proof}
	We proceed by contradiction. Suppose that $\nabla f\neq \overrightarrow{0}$ everywhere. Pick a point $x$ and consider the geodesic with initial data $\gamma(0)=0, \ddt\gamma\mid_{t=0}=\frac{\nabla f}{|\nabla f|}$. By our hypothesis of $f$ being transnormal, $\gamma$ meets each level set perpendicularly and exactly once. By the completeness of the metric, the manifold is diffeomorphic to $P\times \mathbb{R}$ where $P$ is a hypersurface. Thus, the domain of each function $f, f', f'', \tr{L},$ and $\tr{L^2}$ is the entire real line $\mathbb{R}$. By choosing a direction on $\mathbb{R}$, we can assume $f'<0 $ everywhere.
	
By \cite{zhang09completeness}, a complete steady GRS must have $\SS\geq 0$ and $\SS=0$ at a point if and only if the soliton is trivial. Thus, our assumption guarantees $\SS>0$. Then, Lemma \ref{casesf'} applies to deduce that either $f'$ is strictly monotonic or $f'$ has exactly one unique critical point. We consider each case.
	
	\textbf{Case 1:}	 $f'$ is strictly monotonic and $f''<0$ everywhere on $\mathbb{R}$. 	By Lemma \ref{f'decreasing}, $\SS=0$, a contradiction. 
	
	\textbf{Case 2:} $f'$ is strictly monotonic and $f''>0$ everywhere on $\mathbb{R}$. By  Lemma \ref{f''monotonic}, $f''$ is strictly decreasing on $\mathbb{R}$ and, thus, $0< \lim_{t\rightarrow -\infty}f''$.  However, (\ref{nablafandS}) implies that $f'$ is strictly monotonic and bounded. Thus, it must converge to a finite constant and, consequently, $f''$ converges to zero, a contradiction. 
	
	\textbf{Case 3:} $f'$ has a unique critical point, at $a\in \mathbb{R}$, whose value $f'(a)$ is a global maximum by Lemma \ref{casesf'} and the fact that $f'<0$. Then, there is non compact interval $(-\infty, a)$ on which $f''>0$. We apply the same argument as in case 2 to obtain a contradiction.
	
	  Thus, there must be a point $x\in M$ such that $\nabla f\mid_x =\vec{0}$.
\end{proof}
\begin{proposition}
	\label{criticalimpliessingular}
	Let $(M, g, f)$ be a non-trivial complete steady GRS with a transnormal potential function $f$. By Proposition \ref{existencecritical}, $\nabla f(x)=\vec{0}$ at some point $x\in M$. Then the corresponding critical level set containing $x$ is a singular foil.
\end{proposition}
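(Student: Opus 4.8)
The plan is to argue by contradiction, assuming the critical level set $Q$ through $x$ has codimension one (so that it is a foil rather than a singular foil), and to deduce $\SS \equiv 0$, contradicting non-triviality. First I would record that $Q$ genuinely is a critical level set in the strong sense: since $f$ is transnormal, $|\nabla f|^2 = b(f)$ is constant along each level set, so $\nabla f\mid_x = \vec 0$ forces $\nabla f \equiv \vec 0$ on the whole component $Q$. It is worth stressing that the soliton equation must enter in an essential way here, since a general isoparametric function \emph{can} have codimension-one critical level sets (for instance $f=-x_1^2$ on Euclidean $\mathbb{R}^n$); codimension one is excluded only because of the steady GRS structure.

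Assuming $Q$ is codimension one, the product structure from the Gauss lemma (Section \ref{Section 2, Preliminaries}) makes $g$ locally $dt^2+g_t$ across $Q$, and by completeness the unit-speed normal geodesic $\gamma$ through $x$ is defined for all $t\in\mathbb{R}$ with $\gamma(0)=x$; crucially $t=0$ is now an \emph{interior} parameter value, and $f=f(t)$ is smooth near it with $f'(0)=0$. I would then compute $f''(0)$ from the regular identity \eqref{scalar1}: since $f'(0)=0$ this gives $\SS(0)=-f''(0)$. Because a non-trivial complete steady GRS has $\SS>0$ everywhere (the positivity already used in Proposition \ref{existencecritical}), we get $f''(0)=-\SS(0)<0$. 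Hence $f'$ is strictly decreasing through $t=0$ and \emph{changes sign} there: $f'>0$ immediately to the left and $f'<0$ immediately to the right, so $f'$ is not single-signed on $\mathbb{R}$.

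The key step is to promote this local sign of $f''$ to a global one via Lemma \ref{casesf'}. With $\lambda=0$ and $\SS>0$, that lemma shows that if $f'$ possesses any critical point (a zero of $f''$) then $f'$ must be single-signed; as our $f'$ changes sign it can have none, so $f''$ never vanishes, and by continuity $f''(0)<0$ gives $f''<0$ on all of $\mathbb{R}$. (This also makes $f'$ injective, so $\gamma$ meets no further critical level set and the reduced system is legitimately available on each of $(-\infty,0)$ and $(0,\infty)$.) Finally Lemma \ref{f'decreasing}, applied with $\lambda=0$, turns $f''\le 0$ on $\mathbb{R}$ into $f''\equiv 0$ and $\SS=0$, contradicting $f''(0)<0$. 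This contradiction forces $Q$ to have codimension greater than one, i.e.\ to be a singular foil.

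The main obstacle I anticipate is conceptual rather than computational: recognizing that the codimension-one hypothesis is precisely what places $t=0$ in the interior of the geodesic's parameter range and thereby produces a genuine sign change of $f'$ --- at a singular foil $t=0$ would instead be an endpoint and no such contradiction would arise. The only care needed in the computation is to verify $f''(0)=-\SS(0)<0$ and to check that the monotonicity lemmas apply along the \emph{entire} complete geodesic, which follows once $f''<0$ makes $f'$ injective.
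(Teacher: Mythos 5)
Your strategy is the same as the paper's --- compute $f''(0)=-\SS(0)<0$ from \eqref{scalar1}, observe that $f'$ changes sign at $t=0$, use Lemma \ref{casesf'} to rule out zeros of $f''$, and conclude with Lemma \ref{f'decreasing} --- and each of those individual steps is computed correctly. But there is one genuine gap, and it sits exactly where you placed a parenthesis. Lemmas \ref{casesf'} and \ref{f'decreasing} are statements about the reduced ODE system \eqref{reducedsolitionsystem}--\eqref{conservationlaw}; they can be invoked on all of $\mathbb{R}$ only if $\tr{L}$ and $\tr(L^2)$ are defined and finite along the entire geodesic, i.e.\ only if $\gamma$ meets no singular foil. (Crossing a codimension-one critical level set is harmless, since the shape operator of a smooth hypersurface stays bounded; but at a singular foil $\tr{L}$ diverges and the system degenerates.) A priori $\gamma$ could run into a singular foil at some $t_1\neq 0$. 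Your justification that it does not --- ``$f''<0$ makes $f'$ injective, so $\gamma$ meets no further critical level set'' --- is circular: the injectivity of $f'$ was obtained by applying Lemma \ref{casesf'} on all of $\mathbb{R}$, which is precisely what needed justifying.

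The paper closes this gap \emph{before} touching the ODE lemmas, by a purely pointwise argument you can adopt verbatim: at any critical point of $f\circ\gamma$ one has $f'=0$, so \eqref{scalar1} (codimension-one case) or \eqref{singularscalar} (singular-foil case) forces $f''=-\SS<0$ there, using $\lambda=0$ and $\SS>0$. Hence every critical level set met by $\gamma$ is a strict local maximum of $f\circ\gamma$; two of them would trap a local minimum in between, which would be a critical point with $f''\geq 0$, a contradiction. So the level set at $t=0$ is the only critical one, $\gamma$ never meets a singular foil (and the case $M\cong P\times\mathbb{S}^1$ is excluded), and only then are your applications of Lemmas \ref{casesf'} and \ref{f'decreasing} on all of $\mathbb{R}$ legitimate. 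An equivalent repair inside your own scheme: apply Lemma \ref{casesf'} on the maximal interval $(t_-,t_+)\ni 0$ on which $\gamma$ avoids singular foils, conclude $f'$ is strictly decreasing there with $f'(0)=0$; if $t_+<\infty$ then $f'(t_+)<0$, whereas a singular foil at $t_+$ would force $f'(t_+)=|\nabla f|(\gamma(t_+))=0$, so $t_\pm=\pm\infty$. With that step inserted your proof coincides with the paper's. Your side remark that the soliton equation must enter --- $f=-x_1^2$ on $\mathbb{R}^n$ being isoparametric with a codimension-one critical level set --- is correct and a nice touch.
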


\begin{proof}
By \cite{zhang09completeness}, a complete steady GRS must have $\SS\geq 0$ and $\SS=0$ at a point if and only if the soliton is trivial. Thus, our assumption guarantees that $\SS>0$. By either \eqref{singularscalar} or \ref{scalar1}, $f''(x)<0$ and $f$ achieves a local maximum at $x$. We first show that there is no other critical level set. If there is another separate critical level set, then by the same reasoning, $f$ must achieve another local maximum, a contradiction.
		
Next, we proceed by contradiction. Suppose that the critical level set $P$ is not singular so that it is of codimension one. Let $\gamma(t)$ be an $f$-segment intersecting $P$ at $y=\gamma(0)$. The tangent vector of $\gamma'(0)$ must be a normal vector to $P$. Since $P$ is of codimension one, $\gamma$ must pass through P; that is, locally, $M$ is diffeomorphic to a product of $P$ and a segment of $\gamma$. Thus, $M$ must be globally diffeomorphic to $P\times \mathbb{R}$ or $P\times \mathbb{S}^1$. The latter case is ruled out since there is no other critical point. 
		
		Thus, the domain of each function $f, f', f'', \tr{L},$ and $\tr{L^2}$ is the entire real line $\mathbb{R}$.  By Lemma \ref{casesf'}, $f'$ is decreasing and $f''<0=\lambda$ everywhere. This allows us to apply Lemma \ref{f'decreasing} and conclude that $\SS = \lambda (n-1) = 0$, a contradiction. The proof is complete.
\end{proof}

The proof of Theorem \ref{main1} can be concluded now. 
\begin{proof}[Proof of Theorem \ref{main1}]
It follows from Propositions \ref{existencecritical} and \ref{criticalimpliessingular}.
\end{proof}

We have the following additional corollary. 
\begin{corollary}
Let $(M, g, f)$ be a non-trivial complete steady GRS with a transnormal potential function $f$. Then, the following statements hold: 
\begin{enumerate}
\item $f$ is strictly concave on $(0, \infty)$;
\item The mean curvature $\tr L$ is strictly decreasing and satisfies $0<\tr L\leq \frac{n}{t}$.
\item The scalar curvature is decreasing and tends to zero as $t\rightarrow \infty$. 
\end{enumerate}
\end{corollary}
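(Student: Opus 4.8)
The plan is to work on the regular part of $M$ in the arclength parameter $t$ of the $f$-segments issuing from the singular foil. By Theorem \ref{main1} that foil is unique and realizes the global maximum of $f$; I normalize it to $t=0$, so by Theorem \ref{structuretrans} the regular set is $(0,\infty)\times P$ and $f,f',f'',\tr L,\tr(L^2)$ are all functions of $t\in(0,\infty)$, with $f'<0$ after orienting $t$ away from the foil. Non-triviality and \cite{zhang09completeness} give $\SS>0$ throughout, while evaluating \eqref{singularscalar} at the foil (with $\lambda=0$, codimension $k>1$) gives $f''(0)=-\SS(0)/k<0$, hence $f''<0$ on a right-neighbourhood of $0$. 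For item (1) I would upgrade this to $f''<0$ on all of $(0,\infty)$: if not, let $a>0$ be the first zero of $f''$, so $f''<0$ on $(0,a)$ and $f''(a)=0$, making $a$ a critical point of $f'$. By Lemma \ref{casesf'} (using $\lambda=0$, $\SS>0$, $f'<0$) it is the unique critical point, a global maximum of $f'$, and the computation there yields $f'''(a)=(\tr(L^2)-\lambda)f'(a)<0$; thus $f''$ crosses from positive to negative at $a$, forcing $f''>0$ just to the left of $a$, which contradicts $f''<0$ on $(0,a)$. Hence $f$ is strictly concave.

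For item (2) the monotonicity is immediate: from the second line of \eqref{reducedsolitionsystem} with $\lambda=0$ one has $\ddt\tr L=f''-\tr(L^2)$, and $f''<0$ together with $\tr(L^2)\ge0$ gives $\ddt\tr L<0$, so $\tr L$ is strictly decreasing, with $\tr L\to+\infty$ near the foil. The positivity is the delicate point. I would argue by contradiction: were $\tr L(t_0)=0$ for some $t_0$, then on $(t_0,\infty)$ it is negative yet still obeys the Riccati inequality $\ddt\tr L\le f''-\tfrac1n(\tr L)^2<-\tfrac1n(\tr L)^2$; setting $w=1/\tr L<0$ gives $w'\ge\tfrac1n$ with $w(t_0^+)=-\infty$, which forces $w$ to reach $0$ in finite time, i.e.\ $\tr L\to-\infty$ at a finite $t_\ast$. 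Such a finite-distance blow-up of the mean curvature signals a second focal submanifold, contradicting the uniqueness of the singular foil in Theorem \ref{main1}. Therefore $\tr L>0$ on $(0,\infty)$, and the same substitution $w=1/\tr L$, now with $w(0^+)=0$ and $w'\ge\tfrac1n$, integrates to $w(t)\ge t/n$, i.e.\ $0<\tr L\le n/t$.

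For item (3) I would first differentiate \eqref{nablafandS} (with $\lambda=0$), which reads $\SS+(f')^2=C_1$, to get $\SS'=-2f'f''<0$; since $\SS>0$ this gives a strictly decreasing $\SS$ converging to some $\SS_\infty\ge0$. To show $\SS_\infty=0$ I would combine several limits: the bound $0<\tr L\le n/t$ forces $\tr L\to0$; the identity $\SS+(f')^2=C_1$ gives $|f'|\le\sqrt{C_1}$, so the decreasing function $f'$ converges to a finite $f'_\infty<0$, and consequently $\int_1^\infty(-f'')\,dt=f'(1)-f'_\infty<\infty$. Integrability of $f''$ produces a sequence $t_k\to\infty$ with $f''(t_k)\to0$; substituting into \eqref{scalar1} in the form $\SS=-f''-f'\tr L$ gives $\SS(t_k)\to -0-f'_\infty\cdot0=0$, and since $\SS$ itself converges, $\SS_\infty=0$, so $\SS\to0$.

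I expect the positivity of $\tr L$ in (2) to be the crux of the argument, since it cannot be deduced from monotonicity alone and instead requires recognizing that a sign change would trigger a finite-time mean-curvature blow-up, hence a second singular foil forbidden by Theorem \ref{main1}. The vanishing limit $\SS\to0$ in (3) is the secondary difficulty, resolved not by a monotone-limit computation but by exploiting the integrability of $f''$ to extract a sequence along which the evolution equation \eqref{scalar1} pins $\SS$ to zero.
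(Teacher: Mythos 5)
Your proof is correct, but it takes a genuinely different route from the paper. The paper's own proof is essentially a citation: it invokes Theorem \ref{main1} to produce the singular foil and then defers all three items to \cite[Propositions 2.3, 2.4, 2.5]{BDW15}, observing only that those proofs use nothing beyond the reduced system \eqref{reducedsolitionsystem}--\eqref{conservationlaw} and hence survive in the transnormal (not merely cohomogeneity-one) setting. You instead give a self-contained derivation inside the paper's own toolkit: concavity via the first-zero argument for $f''$ combined with the sign computation $f'''(a)=\tr(L^2)(a)\,f'(a)<0$ from Lemma \ref{casesf'}; positivity of $\tr L$ via a Riccati comparison whose failure would force $\tr L\to-\infty$ at finite distance; and $\SS\to 0$ by extracting a sequence $t_k$ with $f''(t_k)\to 0$ from the integrability of $-f''$ and feeding it into \eqref{scalar1}. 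What your approach buys is independence from the external reference and an argument visibly consistent with the paper's Section 3 lemmas; what the paper's approach buys is brevity and the fact that \cite{BDW15} has already checked the delicate endpoint behavior at the foil. The underlying mechanisms (Riccati comparison, the conservation law, the soliton identities) are of course the same family of tools.

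Two steps in your write-up deserve tightening, though neither is a gap. First, the phrase that a finite-distance blow-up of $\tr L$ ``signals a second focal submanifold'' should be justified: by completeness the limit point $\gamma(t_\ast)$ lies in $M$; if $\nabla f\neq\vec 0$ there, the level set through it is a smooth hypersurface and $\tr L$ extends smoothly (hence finitely) across $t_\ast$, a contradiction; so $\nabla f=\vec 0$ there, and the proof of Proposition \ref{criticalimpliessingular} shows a steady GRS admits no second critical level set. Second, you do not actually need the asymptotics $\tr L\to+\infty$ (equivalently $w(0^+)=0$) at the foil: once $\tr L>0$ on all of $(0,\infty)$, the inequality $w'\geq\frac1n$ together with $w(\epsilon)>0$ gives $w(t)\geq \frac{t-\epsilon}{n}$ for every $\epsilon\in(0,t)$, hence $w(t)\geq \frac{t}{n}$ and $\tr L\leq \frac{n}{t}$ without any appeal to the focal-submanifold rate.
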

\begin{proof}
By Theorem \ref{main1}, there must be a singular foil (recall Definition \ref{Definition 2.1, draft}). The results then follow from \cite[Propositions 2.3, 2.4, and 2.5]{BDW15}, respectively. That paper considers the cohomogeneity one case but the proofs of the aforementioned results only use (\ref{reducedsolitionsystem})-(\ref{conservationlaw}) which holds for our more general setup.   
\end{proof}

\subsection{The Shrinking Case}
Here we consider a shrinking GRS $(M, g, f, \lambda)$; that is, $\lambda>0$. W.l.o.g., we can normalize so that $\lambda=\frac{1}{2}$ and rewrite \eqref{nablafandS} as   
\[\SS+|\nabla f|^2=f .\]
First, we recall the following estimate on the potential function $f$ by \cite{HM11comp} (see also \cite{caozhou10}). There exists a point $p\in M$ where $f$ attains its minimum and, furthermore,  
\begin{equation}\label{growthf}
	\frac{1}{4}(d(p, x)-c_1)^2\leq f(x)\leq \frac{1}{4}(d(x, p)+c_2)^2,\end{equation}
for constants $c_1, c_2$ depending only on $n$.

\begin{proof}[Proof of Theorem \ref{main2}]
	By (\ref{growthf}) and the completeness of the metric, each level set of $f$ is compact. Also, since $f$ attains its minimum, the set of critical points is non-empty. Suppose that there is no singular foil. Then each critical level set is a hypersurface. By the same argument as in the proof of Proposition \ref{criticalimpliessingular}, the manifold is diffeomorphic to $P\times \mathbb{R}$ or $P\times \mathbb{S}^1$. We show that the latter case is not possible. If the manifold is diffeomorphic to  $P\times \mathbb{S}^1$, then $f'-\tr(L)$ is periodic. On the other hand, by equation (\ref{reducedsolitionsystem}), 
\[\frac{d}{dt}(f'-\tr{L})=\lambda+ \tr(L^2)=\frac{1}{2}+ \tr(L^2)\geq \frac{1}{2}. \]
Thus, a contradiction. 
\end{proof}




\section{A Particular Ansatz}\label{Section 4, Estimates}
The aim of this Section \ref{Section 4, Estimates} is to prove Theorem \ref{main3} and make various observations through transformations. Here we consider a GRS with the metric given by ansatz \ref{ansatz} and $f=f(t)$. Immediately, the soliton equation is reduced to a system of ODEs.


\begin{lemma}
	\label{solitonsubmer1} Let $(M, g)$ be given as in ansatz \ref{ansatz} and $f=f(t)$ and $n-1=2m$.
The GRS equation becomes 
	\begin{subequations}\label{est 1} 
		\begin{align}
			& \lambda = - \frac{H''}{H} - 2m \frac{F''}{F} + f'', \label{est 1a}\\
			& \lambda = \frac{H^{2}q^{2} }{F^{4}} (2m) - \frac{H''}{H} - 2m \frac{H'F'}{HF} + f' \frac{H'}{H},  \label{est 1b}\\
			& \lambda = \frac{k}{F^{2}} - \frac{H^{2}q^{2}}{F^{4}} 2 - \frac{F''}{F} - (2m-1) (\frac{F'}{F})^{2} - \frac{H'F'}{FH} + f' \frac{F'}{F}. \label{est 1c}
		\end{align}
	\end{subequations}  
\end{lemma}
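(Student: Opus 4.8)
The plan is to derive the three equations in \eqref{est 1} directly by specializing the general curvature formulas from Section \ref{curvaturecomp} to the warped-product structure of ansatz \eqref{ansatz}. Since $f=f(t)$ depends only on the geodesic parameter, the GRS equation $\Rc + \He f = \lambda g$ splits into a normal-normal component, a fiber-fiber component (the $\partial_z$ direction), and a base component (the $\pi^\ast g_{\mathbb N}$ directions). Each of these yields one scalar equation, namely \eqref{est 1a}, \eqref{est 1b}, and \eqref{est 1c} respectively.

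First I would set up an adapted orthonormal frame: let $\nu = \partial_t$ be the unit normal, let $e_0 = H^{-1}\partial_z$ be the unit vector along the fiber, and let $\{e_1,\dots,e_{2m}\}$ be a local $g_t$-orthonormal frame tangent to the base with $e_j = F^{-1}\hat e_j$ for a fixed $g_{\mathbb N}$-orthonormal frame $\hat e_j$. With respect to this splitting the metric is diagonal, so the shape operator $L$ is diagonal as well, with eigenvalue $H'/H$ in the fiber direction and $F'/F$ (each with multiplicity $2m$) in the base directions. This immediately gives $\tr L = H'/H + 2m\,F'/F$ and the Hessian values $\He f(e_0,e_0) = f'\,H'/H$, $\He f(e_j,e_j) = f'\,F'/F$, and $\He f(\nu,\nu) = f''$ from the formulas in Section \ref{curvaturecomp}.

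The substantive computation is the intrinsic Ricci curvature $\Rc_t$ of the fiber $(P,g_t)$, which is where the terms $H^2 q^2/F^4$ and $k/F^2$ come from. Here I would invoke the standard O'Neill/Riemannian-submersion formulas for a metric of the form $H^2\,\eta\otimes\eta + F^2\,\pi^\ast g_{\mathbb N}$ over a Kähler--Einstein base with $\Rc_N = k\,\id$ and curvature of the connection $d\eta = q\,\pi^\ast\omega_{\mathbb N}$. The contribution of the curvature $q$ enters through $|d\eta|^2$, producing the $A$-tensor (horizontal-fiber) terms: the fiber direction picks up $+2m\,H^2 q^2/F^4$ while each base direction picks up $-2\,H^2 q^2/F^4$, which are precisely the $q$-dependent terms in \eqref{est 1b} and \eqref{est 1c}. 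The base Einstein constant $k$ rescales as $k/F^2$ in the base equation. Combining $\Rc_t$ with the extrinsic terms $-\tr(L)\,g_t(LX,X) - g_t(\dot L X,X)$ and adding the Hessian terms then assembles each equation; the normal equation \eqref{est 1a} uses only $\Rc(\nu,\nu) = -\tr(\dot L) - \tr(L^2)$ together with $\tr(\dot L) = (H'/H)' + 2m(F'/F)'$ and $\tr(L^2) = (H'/H)^2 + 2m(F'/F)^2$, which telescope with the Hessian to leave $-H''/H - 2m\,F''/F + f''$.

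I expect the main obstacle to be bookkeeping the Riemannian-submersion curvature of $(P,g_t)$ correctly, in particular getting the sign and the multiplicity factors ($+2m$ versus $-2$) on the $H^2 q^2/F^4$ terms right, since these come from contracting the O'Neill $A$-tensor and are easy to misattribute between the fiber and base directions. A secondary check I would perform is the off-diagonal mixed equation $\Rc(\nu,X) + \He f(\nu,X) = 0$: the formula $\Rc(X,\nu) = -\nabla_X \tr L - g_t(\delta L, X)$ must vanish identically here because all quantities depend only on $t$, which confirms consistency of the frame choice and ensures no fourth independent equation is hiding in the system.
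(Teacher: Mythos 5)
Your proposal is correct in strategy, but note that the paper itself does not prove this lemma at all: it simply defers to \cite[Remark 1.5]{tran23kahler}. What you have written is therefore a self-contained derivation where the paper offers only a citation, and your route is the natural one given the machinery the paper already sets up: the $t$-dependent shape operator formulas of Section \ref{curvaturecomp} handle all the extrinsic terms, and the O'Neill submersion formulas for the circle bundle $(P,g_t)\to(N,F^2 g_N)$ supply the intrinsic Ricci curvature $\Rc_t$. Your bookkeeping checks out: $L$ is diagonal with eigenvalues $H'/H$ (multiplicity $1$) and $F'/F$ (multiplicity $2m$), the telescoping $(H'/H)'+(H'/H)^2=H''/H$ produces \eqref{est 1a}, the cancellation of $(H'/H)^2$ in the fiber equation and the regrouping into $(2m-1)(F'/F)^2$ in the base equation are exactly right, and the vanishing of the mixed equation $\Rc(X,\nu)+\He f(X,\nu)=0$ holds because $\tr L$ is constant on each level set and $\delta L=0$ (the vertical Killing field makes $\nabla\eta$ antisymmetric and the circle fibers are geodesic), so no fourth equation appears.

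The one place where you must be careful is the caveat you yourself flagged: the coefficients of the $H^{2}q^{2}/F^{4}$ terms depend on normalization conventions. With the exterior-derivative convention $d\eta(X,Y)=X\eta(Y)-Y\eta(X)-\eta([X,Y])$ and $\lvert\omega_N\rvert^{2}=2m$, the raw O'Neill contractions give $\Rc_t(e_0,e_0)=\tfrac{m}{2}H^{2}q^{2}F^{-4}$ and a base contribution $-\tfrac{1}{2}H^{2}q^{2}F^{-4}$, i.e.\ a factor of $4$ short of the stated $2m$ and $2$; matching \eqref{est 1b}--\eqref{est 1c} requires the convention in which $d\eta=q\pi^{\ast}\omega_{\mathbb N}$ is read with the factor-$\tfrac12$ definition of $d$ (equivalently, a Sasakian-type normalization of $\eta$ against $\omega_N$). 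This is purely a matter of fixing conventions consistently with \cite{tran23kahler}, not a gap in the argument: the crucial structural facts, namely the ratio $2m:2$ between the fiber and base $A$-tensor contributions and the sign pattern $+$ on the fiber and $-$ on the base, come out of your computation exactly as in the statement.
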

For a proof, see \cite[Remark 1.5]{tran23kahler}. 

\subsection{Special Solutions}\label{Explicit solution section}
The purpose of this section is to point out that some special case of \eqref{est 1} can admit complete conclusions easily. We restrict ourselves to the following special conditions:  
\begin{assumption}\label{Assumptions 1} 
\indent \begin{itemize}
	\item $\lambda=0= H'=q$
\end{itemize}
\end{assumption}
\noindent Under these assumptions we get from \eqref{est 1} 
\begin{align*}
	0 &=-(2m)\frac{F''}{F}+f''\\
	0 &=\frac{k}{F^2}- \frac{F''}{F}-(2m-1)(\frac{F'}{F})^2 + f'\frac{F'}{F}.
\end{align*}
Following \cite{BDW15, Ivey94}, we set up
\begin{align*}
	\gamma &= -f'+\tr{L}=-f'+ 2m\frac{F'}{F},\\
	\dot{\gamma} &= -\tr{L^2}=-2m \big(\frac{F'}{F}\big)^2 <0,\\
	ds &= \gamma dt.
\end{align*}
We introduce new variables: 
	\begin{equation}	
		X_1 = 0, X_2 =\frac{\sqrt{2m} \dot{F}}{\gamma F}, Y_1 = \frac{1}{\gamma};	Y_2 = \frac{\sqrt{2m}}{\gamma F}. \label{special j}
	\end{equation}
The triple $(X_{2}, Y_{1}, Y_{2})$ satisfies (cf. \cite[Equations (3.5)-(3.6)]{BDW15}). 
\begin{subequations}\label{special}
	\begin{align}
		\frac{d X_2}{d s} &= X_2(X_2^2-1)+ \frac{kY_2^2}{\sqrt{2m}},\label{special a}\\
		\frac{d Y_1}{d s} &= Y_1 X_2^2, \label{special b}\\
		\frac{d Y_2}{d s} &= Y_2\left(X_2^2-\frac{X_2}{\sqrt{2m}}\right). \label{special c}
	\end{align}
\end{subequations}
Evidently, $Y_{1}$ can be solved in terms of $X_{2}$ and thus we only need to focus on \eqref{special a} and \eqref{special c}. We show that $F$ becomes zero in finite time in case $\gamma$ is initially non-positive; then, via a key observation \eqref{key 1}, we extend this result to the case $\gamma$ is initially positive. Hereafter, we write $A \overset{(\cdot)}{=}B$ when the equality is due to $(\cdot)$. 
\begin{proposition}\label{Proposition 2.11} 
	We fix $s_{0} \in \mathbb{R}$ to be the initial time. Suppose that $\gamma(s_{0}) \leq 0$. Let $k \in \{0,1\}$. Take any $s_{0} \in \mathbb{R}$ and then any initial data 
	\begin{equation}\label{special f} 
		X_{2}(s_{0})  > 1 \text{ and } Y_{2}(s_{0}) > 0. 
	\end{equation} 
	Then there exists 
	\begin{equation}
		s^{\ast} \in \left(s_{0}, s_{0} + \frac{1}{(X_{2}(s_{0}) - 1) X_{2}(s_{0})} \right]
	\end{equation} 
	such that 
	\begin{equation}
		Y_{2}(s) \nearrow + \infty \hspace{2mm} \text{ as } \hspace{2mm} s \nearrow s^{\ast}; 
	\end{equation}
	i.e., a finite-time blow-up of $Y_{2}(s)$. Consequently, $F$ vanishes in finite time.
\end{proposition}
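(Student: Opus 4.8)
The plan is to analyze the pair \eqref{special a}, \eqref{special c}; as the text observes, $Y_1$ is slaved to $X_2$ and may be ignored. Write $a:=X_2(s_0)>1$. I would argue in two stages: first show that $X_2$ itself blows up in finite time, with the maximal existence time $s^{\ast}$ obeying the stated bound; then bootstrap this to the blow-up of $Y_2$ and translate back to $F$.

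For the first stage, note that the coupling term $\tfrac{kY_2^2}{\sqrt{2m}}$ in \eqref{special a} is nonnegative, so
\[ \frac{dX_2}{ds} \geq X_2(X_2^2-1) \]
for both $k\in\{0,1\}$. The right-hand side is positive for $X_2>1$ and vanishes at $X_2=1$, so since $X_2(s_0)=a>1$ a standard continuation/barrier argument shows $X_2$ is increasing with $X_2\geq a>1$ throughout its interval of existence. Bounding the first factor below by $a$ gives the autonomous separable inequality $\tfrac{dX_2}{ds}\geq a(X_2^2-1)$. Integrating $\tfrac{dX_2}{X_2^2-1}\geq a\,ds$ from $s_0$ and using $\tfrac12\ln\tfrac{X_2-1}{X_2+1}<0$ yields $a(s-s_0)<\tfrac12\ln\tfrac{a+1}{a-1}$ for every $s$ in the domain. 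Since the vector field in \eqref{special} is polynomial, the solution persists until it escapes to infinity; hence $X_2\nearrow+\infty$ at some finite $s^{\ast}\leq s_0+\tfrac{1}{2a}\ln\tfrac{a+1}{a-1}$, and the clean bound follows from $\ln\tfrac{a+1}{a-1}=\ln\!\big(1+\tfrac{2}{a-1}\big)\leq\tfrac{2}{a-1}$, giving $s^{\ast}\leq s_0+\tfrac{1}{(a-1)a}$.

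For the second stage, rewrite \eqref{special c} as $\tfrac{d}{ds}\ln Y_2=X_2^2-\tfrac{X_2}{\sqrt{2m}}$. Since $n-1=2m$ with $m\geq1$, the discriminant of $\tfrac12 X_2^2-\tfrac{1}{\sqrt{2m}}X_2+\tfrac12$ equals $\tfrac{1}{2m}-1\leq0$, so
\[ \frac{d}{ds}\ln Y_2 \;=\; X_2^2-\frac{X_2}{\sqrt{2m}} \;\geq\; \tfrac12\,(X_2^2-1) \;\geq\;0 \]
for all $X_2$; in particular $Y_2$ is increasing, so $\lim_{s\to s^{\ast}}Y_2$ exists in $(0,\infty]$. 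It then suffices to show $\int_{s_0}^{s^{\ast}}X_2^2\,ds=+\infty$. I would do this by contradiction: if $Y_2$ stayed bounded on $[s_0,s^{\ast})$, the term $\tfrac{kY_2^2}{\sqrt{2m}}$ would be bounded, so $\tfrac{dX_2}{ds}\leq 2X_2^3$ for $s$ near $s^{\ast}$; integrating $\tfrac{d}{ds}\big(-\tfrac12 X_2^{-2}\big)\leq2$ from $s$ to $s^{\ast}$ and using $X_2(s^{\ast})=\infty$ gives the blow-up rate $X_2^2(s)\geq\tfrac{1}{4(s^{\ast}-s)}$. Feeding this back, $\tfrac{d}{ds}\ln Y_2\geq\tfrac{1}{8(s^{\ast}-s)}-\tfrac12$ near $s^{\ast}$, whose integral diverges, forcing $Y_2\to\infty$ and contradicting boundedness. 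Hence $Y_2\nearrow+\infty$ as $s\nearrow s^{\ast}$. Finally, since $Y_2=\tfrac{\sqrt{2m}}{\gamma F}$ with $\gamma$ monotone under $ds=\gamma\,dt$ (the hypothesis $\gamma(s_0)\leq0$ selecting the relevant branch), the divergence of $Y_2$ translates into $F\to0$ in finite time.

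The main obstacle is the second stage in the coupled case $k=1$: one cannot control $\int X_2^2\,ds$ from below directly, because the growth of $X_2$ is itself driven by $Y_2^2$. The boundedness-contradiction argument, anchored by the quantitative rate $X_2^2\gtrsim(s^{\ast}-s)^{-1}$, is what closes this circular dependence; the remaining bookkeeping (the sign conventions tying $\gamma(s_0)\leq0$, the $s\leftrightarrow t$ reparametrization, and the passage to $F\to0$) is routine once the blow-up of $Y_2$ is established.
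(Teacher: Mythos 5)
Your proof is correct, but the mechanism by which you force the blow-up of $Y_2$ is genuinely different from the paper's. The paper stays entirely constructive: from $\frac{dX_2}{ds}\geq X_2(X_2^2-1)\geq X_2^2\,(X_2(s_0)-1)$ it derives the explicit hyperbolic lower bound \eqref{special d}, substitutes it into the closed-form expression \eqref{special e} for $Y_2$, and observes that the integral $\int_{s_0}^{s}\frac{dr}{1-(r-s_0)(X_2(s_0)-1)X_2(s_0)}$ diverges logarithmically as $s\nearrow s_0+\frac{1}{(X_2(s_0)-1)X_2(s_0)}$, so the explicit lower bound on $Y_2$ itself diverges. You instead integrate the cubic inequality exactly by separation of variables (obtaining the slightly sharper existence-time bound $\frac{1}{2a}\ln\frac{a+1}{a-1}\leq\frac{1}{a(a-1)}$, $a=X_2(s_0)$) and then transfer blow-up from $X_2$ to $Y_2$ by contradiction, via the rate $X_2^{2}(s)\geq \frac{1}{4(s^{\ast}-s)}$ whose non-integrability forces $\ln Y_2\to\infty$. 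What your route buys: it resolves cleanly the coupling in the case $k=1$, and it actually addresses a point the paper glosses over, namely that the paper's divergent lower bound pins down blow-up of $Y_2$ only if the solution survives up to the explicit time $T$; if $X_2$ blows up strictly earlier, one still must argue $Y_2$ diverges then, which is exactly what your rate estimate supplies. What the paper's route buys: an explicit, quantitative lower bound for $Y_2$ with no case analysis. One imprecision on your side: in stage one, ``the solution escapes to infinity'' does not by itself yield $X_2\nearrow\infty$, since a priori the escape could occur through $Y_2$ alone; this is ruled out because $\frac{d}{ds}\ln Y_2=X_2^2-X_2/\sqrt{2m}$ is bounded wherever $X_2$ is, so $Y_2$ cannot blow up on a finite interval while $X_2$ stays bounded. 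You should state this; note, though, that your stage-two argument invokes $X_2(s^{\ast})=\infty$ only under the hypothesis that $Y_2$ is bounded, and under that hypothesis the escape criterion does force $X_2\to\infty$, so your logic closes.
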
 

\begin{proof}
	By hypothesis $k \in \{0,1\}$ and thus $\frac{kY_{2}^{2}}{\sqrt{2m}} \geq 0$. Then, because $X_{2}(s_{0}) > 1$ due to \eqref{special f}, it is clear from \eqref{special a} that $\frac{dX_{2}(s)}{ds} \geq 0$ for all $s \geq s_{0}$ so that $X_{2}(s) \geq X_{2}(s_{0}) > 1$ for all $s \geq s_{0}$. Then we can compute 
	\begin{align*}
		\frac{dX_{2}(s)}{ds} \overset{\eqref{special a}}{\geq} X_{2}(s)(X_{2}^{2}(s) -1) \geq X_{2}(s) (X_{2}(s) + 1) (X_{2}(s_{0}) - 1)  \geq X_{2}^{2}(s) (X_{2}(s_{0}) -1).
	\end{align*}
Standard ODE technique leads us to 
\begin{equation}\label{special d} 
X_{2}(s) \geq \frac{X_{2}(s_{0})}{1 - (s- s_{0}) (X_{2}(s_{0}) - 1) X_{2}(s_{0})} \hspace{3mm} \forall\hspace{1mm} s \in \left[s_{0}, \frac{1}{  (X_{2} (s_{0}) - 1) X_{2}(s_{0}) } + s_{0} \right).
\end{equation} 
We can directly solve $Y_{2}$ from \eqref{special c} to see that 
\begin{align}
Y_{2}(s) = Y_{2}(s_{0}) e^{\int_{s_{0}}^{s} X_{2}^{2}(r) - \frac{X_{2}(r)}{\sqrt{2m}} dr}  = Y_{2}(s_{0}) e^{\int_{s_{0}}^{s} X_{2}(r) [X_{2}(r) - \frac{1}{\sqrt{2m}}]dr}.   \label{special e}
\end{align}
Now we rely on 
\begin{enumerate} 
\item $X_{2}(r) - \frac{1}{\sqrt{2m}} \geq 0$ for all $r \geq s_{0}$ and $m \in \mathbb{N}$ because $ X_{2}(r) \geq X_{2}(s_{0}) > 1$, 
\item $X_{2}(r) \overset{\eqref{special d}}{\geq} X_{2}(s_{0})$ for all $r \in [s_{0}, s]$
\end{enumerate} 
and bound \eqref{special e} from below by 
\begin{align*}
Y_{2}(s) \overset{\eqref{special e}}{\geq} Y_{2}(s_{0}) e^{X_{2}(s_{0}) \int_{s_{0}}^{s} X_{2}(r) - \frac{1}{\sqrt{2m}} dr} \overset{\eqref{special d}}{\geq} Y_{2}(s_{0}) e^{X_{2}(s_{0}) [ \int_{s_{0}}^{s} \frac{X_{2}(s_{0})}{1- (r-s_{0}) (X_{2} (s_{0}) - 1) X_{2}(s_{0} )} dr - \frac{s-s_{0}}{\sqrt{2m}}]}
\end{align*}
where 
\begin{align*}
&\int_{s_{0}}^{s} \frac{1}{1- (r-s_{0}) (X_{2}(s_{0}) - 1) X_{2}(s_{0})} dr \\
=& \frac{1}{(X_{2}(s_{0}) - 1) X_{2}(s_{0})}\ln \left( \frac{1}{1- (s-s_{0}) (X_{2}(s_{0}) - 1) X_{2}(s_{0} )} \right)  \nearrow + \infty 
\end{align*} 
as $s \nearrow s_{0} + \frac{1}{ (X_{2}(s_{0}) - 1) X_{2}(s_{0} )}$. This implies the finite-time blow up of $Y_{2}(s)$ at some  $s^{\ast} \leq s_{0} + \frac{1}{ (X_{2}(s_{0}) - 1) X_{2}(s_{0} )}$. Because $\dot{\gamma} < 0$ and $\gamma(s_{0}) \leq 0$ by hypothesis, in consideration of \eqref{special j}, this implies that $F$ vanishes in finite time. 
\end{proof} 

Next, we shall assume that $\gamma > 0$ leading to $Y_{1} > 0$ due to \eqref{special j}. We can compute 
\begin{align}\label{key 1}
	\frac{Y_{2}}{Y_{1}} = \frac{ \sqrt{2m}}{F}.
\end{align}
This makes it clear that to show that $F$ vanishes in finite time, it suffices to show that $\frac{Y_{2}}{Y_{1}}$ blows up in finite time. To achieve this, we see that we can solve for $\frac{Y_{2}}{Y_{1}}$ explicitly as follows. 
\begin{align*}
\frac{d}{ds} \left(\frac{Y_{2}}{Y_{1}} \right) = \frac{\frac{dY_{2}}{ds} Y_{1} - Y_{2} \frac{dY_{1}}{ds}}{Y_{1}^{2}} \overset{\eqref{special}}{=} \frac{ Y_{2} (X_{2}^{2} - \frac{X_{2}}{\sqrt{2m}}) Y_{1} - Y_{2} (Y_{1}  X_{2}^{2})}{Y_{1}^{2}} = - \frac{Y_{2}}{Y_{1}} \frac{X_{2}}{\sqrt{2m}}
\end{align*}
implying that 
\begin{equation}\label{special o}
\left(\frac{Y_{2}}{Y_{1}} \right)(s) = \left(\frac{Y_{2}}{Y_{1}} \right)(s_{0}) e^{-  \frac{1}{\sqrt{2m}}\int_{s_{0}}^{s} X_{2}(r) dr} \hspace{3mm} \forall s \geq s_{0}.  
\end{equation} 
Now a modification of the proof of Proposition \ref{Proposition 2.11} leads to the following result. 

\begin{proposition}\label{Proposition 4.3, draft}
	Let $k \in \{-1, 0\}$. Take any $s_{0} \in \mathbb{R}$ and then any initial data 
	\begin{equation}\label{special l}
		X_{2}(s_{0})  < -1, \hspace{3mm} Y_{2}(s_{0}) > 0, \hspace{3mm} Y_{1}(s_{0}) > 0. 
	\end{equation} 
	Then there exists 
	\begin{equation}
		s^{\ast} \in \left(s_{0}, s_{0} + \frac{1}{(X_{2}(s_{0}) + 1) X_{2}(s_{0})} \right]
	\end{equation} 
	such that 
	\begin{equation}
		\left(\frac{Y_{2}}{Y_{1}} \right)(s) \nearrow + \infty \hspace{2mm} \text{ as } \hspace{2mm} s \nearrow s^{\ast}; 
	\end{equation}
	i.e., a finite-time blow-up of $(\frac{Y_{2}}{Y_{1}})(s)$ and consequently $F$ vanishes in finite time. 
\end{proposition}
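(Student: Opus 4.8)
The plan is to run the mirror image of the proof of Proposition~\ref{Proposition 2.11}, now driving $X_2$ to $-\infty$ rather than $+\infty$ and with the sign of $k$ reversed. The decisive structural change is that we are in the regime $\gamma > 0$, so by \eqref{special j} we have $Y_1 > 0$ and, by the hypotheses \eqref{special l}, $(Y_2/Y_1)(s_0) > 0$; consequently the quantity to track is the ratio $Y_2/Y_1$, for which we already have the explicit formula \eqref{special o}, and the identity \eqref{key 1} converts a finite-time blow-up of $Y_2/Y_1$ directly into $F \to 0$. Since $k \in \{-1,0\}$ gives $kY_2^2/\sqrt{2m} \le 0$, equation \eqref{special a} yields the one-sided bound $\tfrac{dX_2}{ds} \le X_2(X_2^2-1)$. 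First I would show that $X_2$ stays below its initial value: because $X_2(s_0) < -1$, the right-hand side $X_2(X_2^2-1)$ is strictly negative there, and more generally it is negative whenever $X_2 \le X_2(s_0) < -1$; hence $X_2$ can only decrease and $X_2(s) \le X_2(s_0) < -1$ for all $s \ge s_0$, the exact counterpart of the bound $X_2(s) \ge X_2(s_0) > 1$ used in Proposition~\ref{Proposition 2.11}.

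Next I would produce the finite-time blow-up of $-X_2$. Factoring $X_2(X_2^2-1)=X_2(X_2-1)(X_2+1)$ and using $X_2 \le X_2(s_0) < -1$ (so that $X_2(X_2-1) > 0$, $X_2 + 1 \le X_2(s_0)+1 < 0$, and $X_2 \le 0$) leads, after elementary manipulation, to
\begin{equation*}
\frac{dX_2}{ds} \le X_2^2\,(X_2(s_0)+1), \qquad X_2(s_0)+1 < 0.
\end{equation*}
The same standard ODE comparison as for \eqref{special d} then gives
\begin{equation*}
-X_2(s) \ge \frac{-X_2(s_0)}{1-(s-s_0)(X_2(s_0)+1)X_2(s_0)}, \qquad s \in \left[s_0,\ s_0+\tfrac{1}{(X_2(s_0)+1)X_2(s_0)}\right),
\end{equation*}
where $(X_2(s_0)+1)X_2(s_0) > 0$, so $-X_2(s) \nearrow +\infty$ before $s_0 + \tfrac{1}{(X_2(s_0)+1)X_2(s_0)}$.

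Finally I would feed this into \eqref{special o}, rewritten as $(Y_2/Y_1)(s) = (Y_2/Y_1)(s_0)\exp\!\bigl(\tfrac{1}{\sqrt{2m}}\int_{s_0}^s (-X_2(r))\,dr\bigr)$. Integrating the lower bound for $-X_2$ gives a logarithmic divergence of $\int_{s_0}^s(-X_2)\,dr$ as $s$ approaches $s_0 + \tfrac{1}{(X_2(s_0)+1)X_2(s_0)}$, which is the identical computation to the one displayed in the proof of Proposition~\ref{Proposition 2.11}, so the exponent tends to $+\infty$ and $Y_2/Y_1 \nearrow +\infty$ at some $s^\ast \le s_0 + \tfrac{1}{(X_2(s_0)+1)X_2(s_0)}$. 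By \eqref{key 1}, $Y_2/Y_1 = \sqrt{2m}/F$, whence $F$ vanishes in finite time.

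I expect the only real subtlety to be the bookkeeping of signs: verifying that with $X_2 < -1$ and $k \le 0$ discarding the now-negative term $kY_2^2/\sqrt{2m}$ produces a valid upper bound that keeps the drift strictly negative (so that $X_2$ cannot turn around), and checking that the blow-up time of $\int(-X_2)$ coincides with the claimed interval endpoint. Everything else is a direct transcription of Proposition~\ref{Proposition 2.11}, with the crucial conceptual point being that the regime $\gamma > 0$ forces us to monitor the ratio $Y_2/Y_1$ through \eqref{special o} and \eqref{key 1} rather than $Y_2$ alone.
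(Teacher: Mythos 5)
Your proposal is correct and follows essentially the same route as the paper's own proof: the identical chain of bounds $\frac{dX_2}{ds} \le X_2(X_2^2-1) \le X_2^2\,(X_2(s_0)+1)$ via monotonicity $X_2(s)\le X_2(s_0)<-1$, the same Riccati-type comparison giving blow-up of $-X_2$ (equivalently the paper's \eqref{special n}), and the same conclusion by integrating into \eqref{special o} and invoking \eqref{key 1}. The sign bookkeeping you flag as the only subtlety is handled exactly as in the paper, so there is nothing to add.
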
 

\begin{proof}
	Because $k \in \{-1,0\}$, we have from \eqref{special a} 
	\begin{align}
		\frac{dX_{2}}{ds} = X_{2}(X_{2}^{2} -1) + \frac{ kY_{2}^{2}}{\sqrt{2m}} \leq X_{2}(X_{2}^{2} -1). \label{special m}
	\end{align}
	Moreover, as $X_{2}(s_{0}) \in (-\infty, -1)$, we see that $X_{2}(s_{0})(X_{2}^{2}(s_{0}) - 1) < 0$ implying that 
	\begin{equation*}
		X_{2}(s) \leq X_{2}(s_{0}) \overset{\eqref{special l}}{<} -1 \hspace{3mm} \forall s \geq s_{0}. 
	\end{equation*} 
	Its consequence is the following:  
	\begin{align*}
		\frac{dX_{2}}{ds} \overset{\eqref{special m}}{\leq} X_{2} (X_{2}^{2} -1) = X_{2} (X_{2} - 1)(X_{2} + 1)  \leq X_{2} (X_{2} -1) (X_{2}(s_{0}) + 1). 
	\end{align*}
	where the last inequality used that $X_{2} < -1, (X_{2} -1) < -2$ and thus $X_{2}(X_{2} -1)> 0$. Next, $X_{2} < -1$ and $X_{2}(s_{0}) + 1 < 0$ so that $X_{2} (X_{2}(s_{0}) + 1) > 0$ allowing us to bound by 
	\begin{align*}
		\frac{dX_{2}}{ds} \leq X_{2}^{2} (X_{2}(s_{0}) + 1). 
	\end{align*}
	It follows that 
	\begin{equation}\label{special n}
		X_{2}(s) \leq \frac{X_{2}(s_{0})}{1- (s-s_{0}) (X_{2} (s_{0}) +1) X_{2}(s_{0})} \hspace{2mm} \forall\hspace{1mm} s \in \left[s_{0}, \frac{1}{ (X_{2}(s_{0}) + 1) X_{2} (s_{0})} + s_{0} \right) 
\end{equation} 
Consequently, 
	\begin{align*}
		&\int_{s_{0}}^{s} \frac{X_{2}(s_{0})}{1- (r-s_{0}) (X_{2}(s_{0}) -1) X_{2}(s_{0})} dr \\
		=& X_{2}(s_{0}) \int_{s_{0}}^{s} \frac{1}{1- (r-s_{0}) (X_{2}(s_{0}) - 1) X_{2}(s_{0})} ds \to -\infty \text{ as } s \nearrow  s_{0} + \frac{1}{(X_{2}(s_{0}) + 1) X_{2}(s_{0})}
	\end{align*}
	because $X_{2}(s_{0}) < 0$. This implies by \eqref{special n} that 
	\begin{align*}
		\int_{s_{0}}^{s} X_{2}(r) dr \to -\infty \text{ as } s \nearrow s^{\ast} \text{ for some } s^{\ast} \leq s_{0} + \frac{1}{(X_{2}(s_{0}) + 1) X_{2}(s_{0})}.
\end{align*}
Plugging this into \eqref{special o} shows that 
\begin{align*}
\left(\frac{Y_{2}}{Y_{1}} \right)(s) = \left(\frac{Y_{2}}{Y_{1}} \right) (s_{0}) e^{-\frac{1}{\sqrt{2m}} \int_{s_{0}}^{s} X_{2}(r) dr} \nearrow + \infty \text{ as } s \nearrow s^{\ast}
\end{align*}
for some $s^{\ast} \leq s_{0} + \frac{1}{(X_{2}(s_{0}) + 1) X_{2}(s_{0})}$. 
\end{proof}

\subsection{Transformation}
In the previous section \ref{Explicit solution section}, under Assumptions \ref{Assumptions 1}, we demonstrated the existence of initial data that produces finite-time blow up in Propositions \ref{Proposition 2.11}-\ref{Proposition 4.3, draft}. We wish to relax some of those assumptions and, in this endeavor, we consider the following transformations inspired from \cite[p. 276]{dw11}: 
\begin{equation}\label{est 2} 
ds = Hdt, \hspace{3mm} \alpha(s) \triangleq H^{2}(t), \hspace{3mm} \beta(s) \triangleq F^{2}(t), \hspace{3mm} \varphi(s) \triangleq f(t). 
\end{equation} 
For simplicity we assume 
\begin{equation}\label{simple} 
	s(0) = 0. 
\end{equation}
Denoting $\dot{X} = \partial_{s} X$ while reserving prime for $\frac{d}{dt}$, we compute from \eqref{est 2} that 
\begin{subequations}\label{est 3} 
	\begin{align}
		& \dot{\alpha} = 2H',  \hspace{5mm} \ddot{\alpha} = \frac{2H''}{H}, \label{est 3a} \\
		& \dot{\beta} = \frac{2FF'}{H}, \hspace{5mm} \ddot{\beta} = \frac{2F'^{2} + 2FF''}{H^{2}} - \frac{2FF'H'}{H^{3}}, \label{est 3b} \\
		& \dot{\varphi} = \frac{f'}{H},  \hspace{5mm} \ddot{\varphi} = \frac{f''}{H^{2}} - \frac{f' H'}{H^{3}}.\label{est 3c} 
	\end{align}
\end{subequations}
Applying these transformations to \eqref{est 1} leads us to 
\begin{subequations}\label{est 52}
\begin{align}
\lambda =& - \frac{\ddot{\alpha}}{2} - \frac{m \alpha \ddot{\beta}}{\beta} + \frac{m \alpha (\dot{\beta})^{2}}{2\beta^{2}} - \frac{m \dot{\beta} \dot{\alpha}}{2\beta} + \alpha \ddot{\varphi} + \frac{\dot{\varphi} \dot{\alpha}}{2}, \label{est 52a}\\
\lambda =& \frac{\alpha q^{2} 2m}{\beta^{2}} - \frac{\ddot{\alpha}}{2} - \frac{m \dot{\alpha} \dot{\beta}}{2\beta} + \frac{\dot{\varphi} \dot{\alpha}}{2},\label{est 52b} \\
\lambda =& \frac{k}{\beta} - \frac{2\alpha q^{2}}{\beta^{2}} - \frac{\alpha \ddot{\beta}}{2\beta} + \frac{ \alpha (\dot{\beta})^{2}}{4\beta^{2}} - \frac{ \dot{\beta} \dot{\alpha}}{2\beta} - \frac{(2m-1) (\dot{\beta})^{2} \alpha}{4\beta^{2}} + \frac{\dot{\varphi} \alpha \dot{\beta}}{2\beta}. \label{est 52c}
\end{align}
\end{subequations}
There is enough similarity in \eqref{est 52a} and \eqref{est 52b} that it is beneficial to subtract them and get a simple equation and then keep \eqref{est 52b} (which is simpler than the 1st) so that the equivalent system may be obtained. Thus, we subtract \eqref{est 52b} from \eqref{est 52a} to obtain 
\begin{equation}\label{est 51}
\ddot{\varphi} = \frac{m \ddot{\beta}}{\beta} - \frac{m (\dot{\beta})^{2}}{2\beta^{2}} + \frac{q^{2} 2m}{\beta^{2}}. 
\end{equation}  
\begin{remark}
	Our calculation is consistent with \cite{dw11} modulo a difference in convention regarding $q$. 
\end{remark}

For the rest of this section, we assume $\lambda=0$. 

\begin{lemma}\label{Lemma 4.2}
	Suppose that $\lambda = 0$. Then $(\alpha, \beta, \varphi)$ that solves \eqref{est 52} satisfies  
	\begin{subequations}\label{est 60}
		\begin{align}
			\ddot{\varphi} =& m\Big(\frac{ \ddot{\beta}}{\beta} - \frac{ (\dot{\beta})^{2}}{2\beta^{2}} + \frac{q^{2} 2}{\beta^{2}}\Big), \label{est 60a} \\
			\ddot{\alpha} =& \frac{\alpha q^{2} 4m}{\beta^{2}} - \frac{\dot{\alpha} \dot{\beta}}{\beta} + \dot{\varphi} \dot{\alpha}, \label{est 60b} \\
			\ddot{\beta} =& \frac{2k}{\alpha}- \frac{4q^{2}}{\beta} - \frac{\dot{\beta} \dot{\alpha}}{\alpha} + \dot{\varphi} \dot{\beta}- \frac{ (\dot{\beta})^{2} (1-m)}{\beta}. \label{est 60c} 
		\end{align}
	\end{subequations}
\end{lemma}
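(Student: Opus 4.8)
The plan is to recognize that Lemma \ref{Lemma 4.2} is purely an algebraic reorganization of the system \eqref{est 52} specialized to $\lambda = 0$: each of the three identities in \eqref{est 60} is obtained by isolating one of the second derivatives $\ddot{\varphi}$, $\ddot{\alpha}$, $\ddot{\beta}$. No new geometric input is required beyond the positivity $\alpha = H^{2} > 0$ and $\beta = F^{2} > 0$ coming from the ansatz \eqref{est 2}, which guarantees that the divisions below are legitimate.

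First, \eqref{est 60a} requires no fresh work. Setting $\lambda = 0$ in \eqref{est 51}—which was already produced by subtracting \eqref{est 52b} from \eqref{est 52a}—and factoring out $m$ yields exactly \eqref{est 60a}. Next, for \eqref{est 60b} I would work with \eqref{est 52b} in isolation. Setting $\lambda = 0$ there, the only second-derivative term is $-\tfrac{\ddot{\alpha}}{2}$; isolating it and multiplying the whole equation by $-2$ places $\ddot{\alpha}$ on the left and produces the right-hand side of \eqref{est 60b}.

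Similarly, \eqref{est 60c} comes from \eqref{est 52c} alone. With $\lambda = 0$, the unique $\ddot{\beta}$ term is $-\tfrac{\alpha \ddot{\beta}}{2\beta}$; I would isolate it and multiply through by $-\tfrac{2\beta}{\alpha}$, which is valid since $\alpha,\beta>0$. The one step needing attention is the collection of the two quadratic terms $\tfrac{\alpha (\dot{\beta})^{2}}{4\beta^{2}}$ and $-\tfrac{(2m-1)(\dot{\beta})^{2}\alpha}{4\beta^{2}}$, which after the scaling combine, via the coefficient identity $1-(2m-1) = 2(1-m)$, into a single $(\dot{\beta})^{2}/\beta$ term. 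Because the three equations in \eqref{est 52} already share $-\tfrac{\ddot{\alpha}}{2}$, $-\tfrac{\alpha \ddot{\beta}}{2\beta}$, and $\alpha\ddot{\varphi}$ in a decoupled way (each of the target second derivatives appears in exactly the right places), no elimination among the three equations is needed beyond the single subtraction already performed for \eqref{est 51}.

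The computation is entirely routine; the only place to slip is the bookkeeping of the $(\dot{\beta})^{2}$-coefficients and the overall sign introduced by the factor $-\tfrac{2\beta}{\alpha}$ in deriving \eqref{est 60c}, so I would double-check that step against the convention of \cite{dw11} flagged in the preceding remark. Having expressed each second derivative explicitly, the system \eqref{est 60} is in the resolved form $\big(\ddot{\varphi},\ddot{\alpha},\ddot{\beta}\big)$ as functions of the lower-order data, which is precisely what the subsequent analysis requires.
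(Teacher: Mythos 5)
Your plan is exactly the paper's own proof: the paper disposes of Lemma \ref{Lemma 4.2} in one line, stating that \eqref{est 60a} comes directly from \eqref{est 51} and that \eqref{est 60b}--\eqref{est 60c} follow from setting $\lambda=0$ in \eqref{est 52b}--\eqref{est 52c}, which is precisely your isolate-and-rescale recipe. (One minor point: $\lambda$ was already cancelled in the subtraction producing \eqref{est 51}, so ``setting $\lambda=0$'' there is vacuous; \eqref{est 60a} is just \eqref{est 51} with $m$ factored out, as you say.)

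However, the step you flagged as ``the only place to slip'' is where your writeup actually does slip: a faithful execution of your recipe does \emph{not} reproduce the printed right-hand sides of \eqref{est 60b} and \eqref{est 60c}. Isolating $\ddot{\alpha}$ in \eqref{est 52b} with $\lambda=0$ gives
\begin{equation*}
\ddot{\alpha} = \frac{4m\alpha q^{2}}{\beta^{2}} - \frac{m\dot{\alpha}\dot{\beta}}{\beta} + \dot{\varphi}\dot{\alpha},
\end{equation*}
i.e.\ the middle term carries a factor $m$ that is absent from \eqref{est 60b}; and isolating $\ddot{\beta}$ in \eqref{est 52c}, your (correct) identity $1-(2m-1)=2(1-m)$ makes the two quadratic terms combine with a \emph{plus} sign,
\begin{equation*}
\ddot{\beta} = \frac{2k}{\alpha} - \frac{4q^{2}}{\beta} - \frac{\dot{\beta}\dot{\alpha}}{\alpha} + \dot{\varphi}\dot{\beta} + \frac{(1-m)(\dot{\beta})^{2}}{\beta},
\end{equation*}
opposite to the sign printed in \eqref{est 60c}. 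These are misprints in the statement of the lemma rather than flaws in your method: the paper itself later uses the $m$-corrected form of \eqref{est 60b} in Lemma \ref{ricatti1} (where it reads $\dot{u}+u^{2} = 4m/\beta^{2} - m\dot{\beta}u/\beta + \dot{\varphi}u$), and every subsequent application of \eqref{est 60c} has either $m=1$ or $\dot{\beta}=0$, so neither discrepancy propagates. Still, as written your proof asserts exact agreement with equations that the correct computation contradicts; you should carry out the two checks you deferred and either state the corrected system or note the misprints explicitly.
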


\begin{proof}
	\eqref{est 60a} comes directly from \eqref{est 51} while \eqref{est 60b}-\eqref{est 60c} follow from considering $\lambda = 0$ in \eqref{est 52b}-\eqref{est 52c}.  	
\end{proof}

\begin{remark}\label{Remark 4.2, draft}
	\begin{enumerate}
		\item The equation of $\varphi$ in \eqref{est 60a} indicates that it can be solved completely in terms of $\beta$. Thus, we can focus only on \eqref{est 60b}-\eqref{est 60c}. 
		\item This system \eqref{est 60} is ``somewhat'' simple and produced a few nice cancellations. However, this is still very difficult to solve for a general solution. The system \eqref{est 60} is of 2nd order and any differential inequality arguments to derive the desired finite-time extinction seems difficult. Considering that such equations are 2nd order, one idea would to approach it like wave equation and multiply \eqref{est 60b} and \eqref{est 60c} with $\dot{\alpha}$ and $\dot{\beta}$ respectively and add to obtain 
		\begin{align}
			\frac{1}{2} \partial_{s} ( \lvert \dot{\alpha} \rvert^{2} + \lvert \dot{\beta} \rvert^{2}) = \frac{\alpha q^{2} 4 \dot{\alpha}}{\beta^{2}} - \frac{ (\dot{\alpha})^{2} \dot{\beta}}{\beta} + \dot{\varphi} (\dot{\alpha})^{2} + \frac{2k \dot{\beta}}{\alpha} - \frac{4 q^{2} \dot{\beta}}{\beta} - \frac{( \dot{\beta})^{2} \dot{\alpha}}{\alpha} + \dot{\varphi} (\dot{\beta})^{2}; 
		\end{align}
		however, it is not clear how to proceed from this identity. 
		\item We mention that \eqref{est 60} has some advantages in allowing us to deduce a few new explicit solutions. 
	\end{enumerate} 
\end{remark} 

Let us elaborate on Remark \ref{Remark 4.2, draft} (3). First, under the assumptions of $m = 1$ and $q=1$, if we define
\begin{equation}
X= \alpha\beta, \hspace{3mm} A=\dot{\alpha}\beta, \hspace{3mm} B=\alpha\dot{\beta},
\end{equation}\label{est 3, draft}
then \eqref{est 60b} and  \eqref{est 60c} imply that 
\begin{align*}
	\dot{A} &= 4\frac{X}{\beta^2}+ \dot{\varphi}A,~	\dot{B}= 2k-4\frac{X}{\beta^2}+ \dot{\varphi}B,
	{\ddot{X}}= 2k+\dot{\varphi} \dot{X}.	
\end{align*}
Let $u=e^{-\varphi}$ then
\begin{align*}
2ku &=\ddot{X}u-u\dot{\varphi} \dot{X} = \frac{d}{ds}(\dot{X} u),\\
\dot{X} &= e^{\varphi}\left(\int 2k e^{-\varphi} ds \right).
\end{align*}
Now \eqref{est 60a} implies $\ddot{\varphi} \beta^2 = \ddot{\beta}{\beta}- \frac{ (\dot{\beta})^{2}}{2}+2$. Differentiating with respect to $s$ yields $\dddot{\varphi}\beta^2+ 2\beta\ddot{\varphi}\dot{\beta}=\dddot{\beta}\beta$ so that 
\begin{equation}
	\label{eq1}   \dddot{\varphi}\beta+ 2\ddot{\varphi}\dot{\beta}=\dddot{\beta}.
	\end{equation}
We can solve for $\beta$ explicitly as follows. For $s$ defined by \eqref{est 2}, on an interval where $\dot{\beta}\neq 0$, we can write $s$ as a function of $\beta$ and obtain $\ell$ that satisfies 
\begin{align*}
\ddot{\varphi} &= \ddot{\varphi}(s(\beta))=: \ell(\beta).
\end{align*}
Then equation \eqref{est 60a} could be rewritten as, regarding $\beta$ as the variable and $v(\beta)=\dot{\beta}$:
\begin{align*}
\ell(\beta) = \ddot{\varphi} = \frac{\ddot{\beta}}{\beta} - \frac{ (\dot{\beta})^{2}}{2\beta^{2}} + \frac{2}{\beta^{2}}  =\frac{\frac{\partial v}{\partial \beta} v}{\beta}-\frac{v^2}{2\beta^2}+\frac{2}{\beta^2}  = \frac{ 2 v (\frac{\partial v}{\partial \beta}) 2 \beta - v^{2} 2}{(2\beta)^{2}} + \frac{2}{\beta^{2}}= \frac{\partial (\frac{v^2}{2\beta})}{\partial \beta}+\frac{2}{\beta^2}.
\end{align*}
Integrating with respect to $\beta$ yields:
\begin{align*}
	\frac{v^2}{2\beta} = \int  \frac{ \partial ( \frac{v^{2}}{2\beta})}{\partial \beta} d \beta = \int [ \ell (\beta) - 2 \beta^{-2} ] d \beta = \int \ell(\beta) d\beta+ \frac{1}{\beta}.
\end{align*}
Reversing the change of variables then yields:
\begin{align*}
	\frac{\dot{\beta}^2}{2} = \frac{v^{2}}{2} = 1+\beta \int \ell(\beta) d\beta= 1+\beta\int \ddot{\varphi}\dot{\beta} ds.
\end{align*}
This equation or its sibling \eqref{eq1} provides a recipe to construct explicit local solutions. To aim for any global solution, the known examples suggest that we look for $\lim_{s\rightarrow \infty}\ddot{\varphi}=0$. 

The simplest case is to consider $\ddot{\varphi}\equiv 0$. Then $\beta$ must be quadratic, i.e.,
\begin{align*}
	\beta = c_1 s^2+c_2 s+c_3 \hspace{2mm} \text{ and by (\ref{est 60a})} \hspace{2mm} 4(c_1c_3+1) &= c_2^2.
\end{align*}
Also, $\dot{\varphi}=c$ and $\varphi= cs+d$. It follows immediately that if $c_1\neq 0$, then the equation $c_1 s^2+c_2 s+c_3=0$ has two real roots:
\begin{align*}
\frac{-c_{2} \pm \sqrt{c_{2}^{2} - 4 c_{1} c_{3}}}{2c_{1}} = \frac{-c_{2} \pm \sqrt{ 4(c_{1}c_{3} + 1) - 4 c_{1}c_{3}}}{2c_{1}} = \frac{-c_2\pm 2}{2c_1}.
\end{align*} 
Since a non-trivial steady soliton is non-compact, we deduce $c_1=0$ and, by (\ref{est 60a}), $c_2=2$. It is then immediate to solve explicitly for $\alpha$ and $\beta$.  


Next, to the best of our knowledge, the only known explicit solution to the GRS equation \eqref{est 1}, even under the simplifications of $\lambda = q = k =0$, is 
\begin{equation}\label{known explicit}
	H \equiv C_{1}, \hspace{3mm} F \equiv C_{2}, \hspace{3mm} f(t) = C_{3} + C_{4} t 
\end{equation}
for some constants $C_{1}, C_{2}, C_{3}, C_{4} \in \mathbb{R}$. Taking advantage of \eqref{est 60}, we discover a new family of non-trivial explicit solutions to \eqref{est 1} as follows.  

\begin{example}
	Let us consider the transformed system \eqref{est 60} with $q = 0, m = 1,$ and $ k =0$ to obtain 
	\begin{align}\label{est 60 simplified}
		\ddot{\varphi} =& \frac{ \ddot{\beta}}{\beta} - \frac{ (\dot{\beta})^{2}}{2\beta^{2}}, \\
		\ddot{\alpha} =&  - \frac{\dot{\alpha} \dot{\beta}}{\beta} + \dot{\varphi} \dot{\alpha},  \\
		\ddot{\beta} =& - \frac{\dot{\beta} \dot{\alpha}}{\alpha} + \dot{\varphi} \dot{\beta}. 
	\end{align}
	Considering the known explicit solution in \eqref{known explicit}, we can consider the case $F \equiv C$.  Considering $\beta(s) = F^{2}(t)$ from \eqref{est 2}, we see that $\beta \equiv C^{2}$. This leads to, according to \eqref{est 60 simplified}, 
	\begin{align*}
		\ddot{\varphi} = 0 \text{ and } \ddot{\alpha} = \dot{\varphi} \dot{\alpha}.
	\end{align*}
	We can solve the second equation to get $\dot{\varphi} = \frac{\ddot{\alpha}}{\dot{\alpha}} = \partial_{s} \ln ( \lvert \dot{\alpha} \rvert)$. But the first constraint tells us that $\dot{\varphi} = C_{1}$. So, we get $\ln (\lvert \dot{\alpha} \rvert ) = C_{1} s + C_{2}$ and thus $\dot{\alpha} = e^{C_{1}s + C_{2}}$ which leads to $\alpha = \frac{1}{C_{1}} e^{C_{1}s + C_{2}}$. Going back to the functions of $t$, we discover a new family of non-trivial explicit solutions to \eqref{est 1}:  
	\begin{align*}
		F \equiv C, \hspace{3mm} H(t) = \frac{2}{C_{1} (C_{3} - t)}, \hspace{3mm} f (t) = - 2 \ln \left( \frac{ \sqrt{C_{1}} (C_{3} - t)}{2} \right) \text{ for } C_{1} > 0, t < C_{3}; 
	\end{align*}
	several straight-forward generalizations are certainly possible. 
\end{example}

\subsection{Estimates}
Here we assume that $\lambda=0$ and $q=1$ and we'll prove Theorem \ref{main3}. By Theorem \ref{main1}, there must be a singular foil and w.l.o.g., we assume it occurs at $t=0$. The completeness and smoothness of the metric indeed leads to important consequences. First, we observe that the domain of $f$ is $[0, \infty]$; i.e., 
\[|f(t)|<\infty \text{  for any  } t<\infty,\] 
or equivalently via the transformation (\ref{est 2}),
\begin{equation}
	\label{ttoinfity}
	t =\int_0^s\frac{dx}{\sqrt{\alpha(x)}}\rightarrow \infty \text{ as } s\rightarrow \infty;
\end{equation}
A priori either $H$ or $F$ or both must be vanishing at zero. 

\begin{lemma}
	\label{alpha0}
	$H(0)=0$.
\end{lemma}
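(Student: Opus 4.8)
The plan is to argue by contradiction, exploiting the ``bundle-curvature'' term $\tfrac{2m H^2}{F^4}$ appearing in \eqref{est 1b}; this term is present exactly because $q=1$, and it blows up too violently to be sustained if $H$ stays positive at the collapsing foil. First I would set up the dichotomy coming from the singular foil. By Theorem \ref{main1} the critical level at $t=0$ is a singular foil, hence of codimension greater than one. On a regular level the induced metric is $g_t = H^2\,\eta\otimes\eta + F^2\,\pi^{\ast}g_N$, which is a genuine Riemannian metric on the $(n-1)$-dimensional total space $P$ precisely when $H>0$ and $F>0$; in that case the level set is a hypersurface of codimension one. Therefore at the singular foil the metric $g_t$ must degenerate, i.e. $H(0)=0$ or $F(0)=0$. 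Assuming, for contradiction, that $H(0)\neq 0$, the circle/line fibre does not collapse, so the only way to lose a dimension is through the base, forcing $F(0)=0$.

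Next I would extract the analytic contradiction directly from \eqref{est 1b}. By completeness and smoothness of the metric up to the singular foil, $H$, $F$ and $f$ extend to smooth functions of $t$ near $0$; in particular $H(0)>0$ while $H'$, $H''$, $F'$ and $f'$ all have finite one-sided limits as $t\to 0^{+}$ (indeed $f'(0)=0$ since $t=0$ is critical). Setting $\lambda=0$ and $q=1$ in \eqref{est 1b}, isolating the bundle term, and multiplying through by $F^{4}$ yields
\[
H^{2} \;=\; \frac{F^{4}}{2m}\Big(\frac{H''}{H}-f'\,\frac{H'}{H}\Big) \;+\; F^{3}F'\,\frac{H'}{H}.
\]
Letting $t\to 0^{+}$, the left-hand side tends to $H(0)^{2}>0$, whereas every summand on the right carries a strictly positive power of $F\to 0$ multiplied by quantities with finite limits, so the right-hand side tends to $0$. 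This contradiction shows $H(0)=0$. Note that this rearrangement is what makes the estimate robust: I need only that $F'$ (not $F'/F$) stays bounded, so I avoid committing to any precise vanishing rate of $F$.

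The step I expect to be the main obstacle is the justification that the warping functions and the potential extend with bounded derivatives up to the singular foil, and that $F(0)=0,\ H(0)>0$ is genuinely the only competing scenario once a collapse is forced; this is precisely where the completeness and smoothness hypotheses, together with the structural description in Theorem \ref{structuretrans}, are needed. As a cleaner but less elementary cross-check I would also record the scalar-curvature argument: at a singular foil \eqref{singularscalar} gives the finite value $\SS(0)=C_1$, while the O'Neill contribution of the submersion produces in $\SS$ a term proportional to $\tfrac{H^2}{F^4}$, which blows up if $H(0)>0$ and $F(0)=0$ — yielding the same contradiction by a more geometric route.
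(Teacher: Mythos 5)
Your proposal is correct and follows essentially the same route as the paper's own proof: assume $H(0)>0$, conclude $F(0)=0$ from the degeneration forced by the singular foil, and then read off a contradiction from \eqref{est 1b} because the term $\tfrac{2mH^2q^2}{F^4}$ blows up while every other term stays bounded as $t\to 0^{+}$. The only differences are cosmetic: the paper multiplies \eqref{est 1b} by $F^2$ rather than isolating $H^2$ with a factor $F^4/(2m)$, and it justifies the boundedness of the remaining terms by invoking real analyticity of complete gradient Ricci solitons (Bando) instead of your appeal to smoothness and completeness near the foil.
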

\begin{proof}
	We proceed by contradiction. Suppose that $H(0)>0$ then $F(0)=0$. That is, the singular foil is a circle $\mathbb{S}^1$. Recall that a complete GRS is real analytic \cite{bando87, dw11}. Henceforth, $F$ and $H$ are real analytic and each has a power series expansion near the singular foil.  
	By (\ref{est 1b}), 
	\[\lambda F^2= \frac{H^2 q^2}{F^2}2m-\frac{H''}{H}F^2-2m \frac{H'F'F}{H}+f'\frac{H'}{H}F^2.\]
	Thus, every term except for $\frac{H^2 q^2}{F^2}2m$ is approaching a finite value as $t\rightarrow 0$, which is a contradiction. 
\end{proof}

\begin{proposition}
	\label{alphamonotone}
	$\alpha$ is monotonic and $\dot{\alpha}(s)>0$ for $s>0$. 
\end{proposition}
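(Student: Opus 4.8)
The plan is to treat the $\alpha$-equation \eqref{est 60b} (specialized to $\lambda=0$ and $q=1$) as a first-order linear ODE for the single unknown $w:=\dot\alpha$ and to integrate it against a well-chosen integrating factor. Writing \eqref{est 60b} as
\[
\dot w - \Big(\dot\varphi - \frac{\dot\beta}{\beta}\Big) w = \frac{4m\,\alpha}{\beta^{2}},
\]
I first observe that the integrating factor is $\mu:=\beta e^{-\varphi}$, since $\int\big(\dot\varphi-\dot\beta/\beta\big)\,ds = \varphi-\ln\beta$ (recall $\beta=F^{2}>0$ on the regular part). Multiplying through yields the clean conservation-type identity
\[
\frac{d}{ds}\big(\beta e^{-\varphi}\,\dot\alpha\big) = \frac{4m\,\alpha\,e^{-\varphi}}{\beta}.
\]

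The next step is to note that the right-hand side is strictly positive for $s>0$: here $m\ge 1$, $\varphi=f$ is finite, and $\alpha=H^{2}>0$, $\beta=F^{2}>0$ on the regular part $s>0$, whereas $\alpha(0)=0$ by Lemma \ref{alpha0}. Hence $G(s):=\beta e^{-\varphi}\dot\alpha$ is strictly increasing on $(0,\infty)$. Because a complete GRS is real analytic, the functions $\alpha,\beta,\varphi$ extend continuously to $s=0$, so $G$ is continuous there with $G(0)=\beta(0)e^{-f(0)}\dot\alpha(0)$. The correct sign of the initial datum comes for free: since $\alpha=H^{2}\ge 0$ on $[0,\infty)$ with $\alpha(0)=0$, the endpoint $s=0$ is a minimum of $\alpha$, whence $\dot\alpha(0)\ge 0$ and therefore $G(0)\ge 0$.

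Combining these facts, for any $s>0$ I have $G(s)>G(s/2)\ge G(0)\ge 0$, using strict monotonicity on $(0,\infty)$ together with $G(s/2)\ge \lim_{u\to 0^{+}}G(u)=G(0)$; thus $G(s)>0$. Since $\beta e^{-\varphi}>0$ for $s>0$, this forces $\dot\alpha(s)>0$ for every $s>0$, so $\alpha$ is strictly increasing on $(0,\infty)$ and non-decreasing on $[0,\infty)$, i.e. monotonic, as claimed. I expect the only delicate point to be the behavior at the singular foil $s=0$: one must know that $\beta$ and $\varphi$ stay finite and that $\dot\alpha(0)\ge 0$ there. These I extract respectively from the finiteness of $f$ at the foil (equivalently from \eqref{ttoinfity}) and from the one-sided minimum of $\alpha=H^{2}$; once the boundary behavior is pinned down, the interior argument through the integrating factor is routine.
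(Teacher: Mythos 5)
Your proof is correct, but it takes a genuinely different route from the paper's. The paper argues pointwise at a hypothetical critical point: by Lemma \ref{alpha0}, $\alpha(0)=0$ and $\dot{\alpha}>0$ near $s=0$; if $\dot{\alpha}(s_{0})=0$ at some $s_{0}>0$, then evaluating \eqref{est 60b} there gives $\ddot{\alpha}(s_{0})=4m\,\alpha(s_{0})q^{2}/\beta^{2}(s_{0})>0$, so $s_{0}$ would be a strict local minimum of $\alpha$, contradicting that $\alpha$ increases on $(0,s_{0})$. You instead integrate \eqref{est 60b} as a linear first-order ODE for $\dot{\alpha}$: the integrating factor $\beta e^{-\varphi}$ yields $\frac{d}{ds}\big(\beta e^{-\varphi}\dot{\alpha}\big)=4m\,\alpha e^{-\varphi}/\beta>0$ on $(0,\infty)$, so $G=\beta e^{-\varphi}\dot{\alpha}$ is strictly increasing there, and $G(0)\geq 0$ because $\dot{\alpha}(0)\geq 0$ is forced by $\alpha\geq 0=\alpha(0)$. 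Comparing the two: the paper's argument is shorter and purely local, but it needs the strict statement $\dot{\alpha}>0$ on a neighborhood of $0$, which implicitly relies on real analyticity (or the smoothness conditions at the singular orbit) to control the behavior near the foil; your argument needs only the soft one-sided inequality $\dot{\alpha}(0)\geq 0$, which comes for free, at the price of requiring continuity of $\beta$, $\varphi$, and $\dot{\alpha}$ up to $s=0$. That boundary regularity does hold here: $f$ is smooth on $M$ so $\varphi(0)$ is finite, $\beta(0)=F^{2}(0)$ is finite, and $\dot{\alpha}=2H'$ by \eqref{est 3a} extends continuously to the singular orbit; moreover your argument survives even in the degenerate scenario $\beta(0)=0$ (not excluded by Lemma \ref{alpha0}), since then $G(0)=0$ and the chain $G(s)>G(s/2)\geq G(0)\geq 0$ is unaffected. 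Both proofs use the same non-degeneracy input, namely $\alpha,\beta>0$ on the regular part $s>0$. A side benefit of your version is that the monotone quantity $\beta e^{-\varphi}\dot{\alpha}$ is a reusable conservation-type identity, in the spirit of \eqref{conservationlaw}, whereas the paper's local argument produces no such byproduct.
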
 
\begin{proof}
	By Lemma \ref{alpha0}, $\alpha(0)=0$ and $\dot{\alpha}(s)>0$ for $s$ near zero. Suppose there is $s_0>0$ such that $\dot{\alpha}=0$. By Lemma \ref{Lemma 4.2}, 
	\[\ddot{\alpha}(s_{0})=\frac{4m\alpha (s_{0}) q^2}{\beta^2 (s_{0})}>0.\]
	Thus, $\alpha(s_0)$ is a local minimum, a contradiction.
\end{proof}

\begin{lemma}
\label{betaatmost}
$\beta$ has at most one local extremum. 
\end{lemma}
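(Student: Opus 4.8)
The plan is to test the sign of $\ddot\beta$ at the critical points of $\beta$ and to show that two extrema cannot coexist. First I would specialize \eqref{est 60c} (with $\lambda=0$, $q=1$) to a critical point $s$, where $\dot\beta(s)=0$, so that every term carrying a factor $\dot\beta$ drops out and
\begin{equation*}
\ddot\beta(s)=\frac{2k}{\alpha(s)}-\frac{4}{\beta(s)}=\frac{2\bigl(k\beta(s)-2\alpha(s)\bigr)}{\alpha(s)\beta(s)}.
\end{equation*}
Introducing $W:=k\beta-2\alpha$, the sign of $W$ at a critical point decides its type: $W>0$ gives a local minimum and $W<0$ a local maximum, since $\alpha,\beta>0$ for $s>0$ by Lemma \ref{alpha0} and Proposition \ref{alphamonotone}. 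I would also record $\dot W=k\dot\beta-2\dot\alpha$, so that at any critical point $\dot W=-2\dot\alpha<0$ for $s>0$.

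For $k\le 0$ the argument closes at once: $W=k\beta-2\alpha<0$ for all $s>0$, so every critical point is a strict local maximum, and two maxima would enclose a third critical point, a contradiction; hence at most one extremum. For $k>0$ I would first rule out a maximum followed by a minimum. On the intervening interval $\dot\beta<0$, so $\dot W=k\dot\beta-2\dot\alpha<0$ and $W$ is strictly decreasing; but it would have to pass from $W\le 0$ at the maximum to $W\ge 0$ at the minimum, which is impossible. Together with the fact that between two minima (resp. maxima) there must lie a maximum (resp. minimum), forcing strict alternation, the only configuration that could produce two extrema is a single minimum followed by a single maximum.

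The crux is therefore to exclude this last configuration. Suppose $\beta$ had a local maximum at $s_2$. By the previous step there is no further critical point, so $\beta$ is strictly decreasing on $(s_2,\infty)$, $\beta\downarrow\beta_\infty\ge 0$, while $W$ keeps decreasing and stays negative. I would then pass to the limit $s\to\infty$ in \eqref{est 60c}, controlling the lower-order terms by the steady a priori estimates available from the corollary to Theorem \ref{main1} (scalar-curvature decay, whence $f'\to-\sqrt{C_1}$ by \eqref{nablafandS}, concavity of $f$, and $\tr L\to 0$), together with $\dot\varphi=f'/\sqrt{\alpha}$ and the monotonicity of $\alpha$. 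If $\beta_\infty=0$ the singular term $-4/\beta$ forces $\ddot\beta\to-\infty$ and a finite-$s$ collapse, contradicting global existence for all $s$ (equivalently $t\to\infty$ in \eqref{ttoinfity}); if $\beta_\infty>0$ the right-hand side converges and $\ddot\beta\to \frac{2k}{\alpha_\infty}-\frac{4}{\beta_\infty}$, which must vanish lest $\dot\beta\to\pm\infty$ --- but then $W_\infty=0$ (when $\alpha_\infty<\infty$) contradicts $W<0$ strictly decreasing past $s_2$, while $\alpha_\infty=\infty$ gives $\ddot\beta\to-4/\beta_\infty<0$, again impossible. Hence $\beta$ has no local maximum, and since two minima would enclose a maximum, at most one extremum survives.

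The hard part will be making the limit $s\to\infty$ rigorous. I expect the genuine obstacle to be a Barbalat-type justification that $\dot\beta\to 0$ and that the $\dot\beta$- and $\dot\varphi$-terms in \eqref{est 60c} are negligible along the decreasing tail, together with a careful treatment of the collapsing case $\beta_\infty=0$, where several terms on the right-hand side are simultaneously singular. All of the asymptotic inputs needed here are exactly the steady-soliton estimates recorded before this lemma, so the task is to assemble them rather than to prove new a priori bounds.
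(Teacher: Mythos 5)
Your critical-point identity and your first two cases coincide with the paper's own proof: at a critical point \eqref{est 60c} gives $\ddot\beta = 2(k\beta - 2\alpha)/(\alpha\beta)$, the case $k\le 0$ is immediate, and your exclusion of a consecutive max-then-min pair via the monotonicity of $W = k\beta - 2\alpha$ (using $\dot\alpha>0$ from Proposition \ref{alphamonotone} and $\dot\beta\le 0$ in between) is exactly the paper's comparison $\ddot\beta(s_2)<\ddot\beta(s_1)\le 0$, only repackaged. You are also right that the min-then-max configuration is \emph{not} covered by that comparison --- there the terms $2k/\alpha$ and $-4/\beta$ move against each other --- whereas the paper disposes of it with the words ``the other case follows from an analogous argument.'' So your instinct about where the real work lies is sharper than the paper's write-up.

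However, your proposed exclusion of the min-then-max tail has a genuine gap, and the mechanism you chose cannot work as stated. On the decreasing tail $(s_2,\infty)$ one has $\dot\beta\le 0$, $\dot\alpha>0$, $m\ge 1$, and $\dot\varphi=f'/\sqrt{\alpha}\le 0$ (by Theorem \ref{main1}, $f$ attains its global maximum at the singular foil $t=0$ and has no other critical points). Consequently \emph{every} term you discard from \eqref{est 60c} --- namely $-\dot\beta\dot\alpha/\alpha$, $\dot\varphi\dot\beta$, and $(m-1)(\dot\beta)^2/\beta$ --- is non-negative, so the equation only yields the one-sided bound $\ddot\beta \ge 2(k\beta-2\alpha)/(\alpha\beta)$, which points the wrong way. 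Nothing forces $\ddot\beta\to-\infty$ when $\beta_\infty=0$, nor $\ddot\beta\to -4/\beta_\infty$ when $\alpha_\infty=\infty$: for instance a tail $\beta\sim e^{-cs}$ has $\ddot\beta\to 0^+$, the singular term $-4/\beta$ being absorbed by $-\dot\beta\dot\alpha/\alpha$ once $\dot\alpha/\alpha\sim 4/(c\beta^2)$. What excludes such tails is not a pointwise limit of $\ddot\beta$ but a global constraint: a bounded $\beta$ forces at least exponential growth of $\alpha$, contradicting completeness through \eqref{ttoinfity}. That is precisely the content of the paper's Lemmas \ref{bfiniteablow}, \ref{ricatti1} and \ref{betablow}, which are proved \emph{after}, and cite, the present lemma; so you cannot simply quote them (nor the later fact $\beta\to\infty$), and you would have to rerun that Riccati/growth argument using only the eventual monotonicity you have already established. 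No Barbalat-type lemma rescues the limits you want, because the sign structure genuinely permits $\ddot\beta\ge 0$ along the entire tail. Of your three sub-cases, only $\beta_\infty>0$ with $\alpha_\infty<\infty$, via the contradiction between $W_\infty=0$ and $W<W(s_2)\le 0$, looks repairable (along subsequences where $\dot\alpha$ and $\dot\beta$ tend to zero, which exist since both functions are monotone and bounded).
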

\begin{proof}
 At a critical point $s_i$ of $\beta$, $\dot{\beta}=0$ and \eqref{est 60c} implies:
\[\ddot{\beta}(s_i)= \frac{2k}{\alpha (s_{i})}-\frac{4q^2}{\beta (s_{i})}.\]
If $k\leq 0$, then immediately we have $\ddot{\beta}<0$ and the result follows.
	
Suppose that $k>0$ and $\beta$ has more than one local extrema. Then it must have consecutive extrema at $s_1<s_2$. We will then consider all possible cases. First, suppose that $\beta(s_1)$ is a local maximum then 
\[0\geq \ddot{\beta}(s_1)= \frac{2k}{\alpha (s_{1})}-\frac{4q^2}{\beta (s_{1})}.\]
Since $s_1$ and $s_2$ are consecutive extrema, $\dot{\beta}(s)\leq 0$ for all $s\in [s_1, s_2]$; consequently, $\beta$ is non-increasing on that interval. Furthermore, by Lemma \ref{alphamonotone}, $\alpha$ is strictly increasing. Thus,  
\[\ddot{\beta}(s_2) = \frac{2k}{\alpha (s{2})} -  \frac{4q^{2}}{\beta(s_{2})} < \frac{2k}{\alpha (s_{1})} - \frac{4q^{2}}{\beta (s_{1})} <\ddot{\beta}(s_1)<0.\]
Consequently, $\beta(s_2)$ is also a local maximum, a contradiction. The other case follows from an analogous argument.
	
\end{proof}
\begin{lemma}\label{bfiniteablow} If there is a constant $c$ such that $\beta <c$, then $\lim_{s\rightarrow \infty}\alpha(s) =\infty$.
\end{lemma}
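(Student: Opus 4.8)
The plan is to extract from the second-order equation \eqref{est 60b} a first-order statement strong enough to force $\alpha\to\infty$. By Proposition \ref{alphamonotone}, $\alpha$ is increasing with $\dot\alpha>0$ for $s>0$, so $\lim_{s\to\infty}\alpha$ exists in $(0,\infty]$; it therefore suffices to produce a positive constant lower bound for $\dot\alpha$ on a half-line, since then $\alpha$ grows at least linearly. Fix $s_1>0$ and note that for $s\ge s_1$ we have $\alpha(s)\ge\alpha(s_1)>0$, while the hypothesis gives $0<\beta<c$ (recall $\beta=F^2>0$, as Lemma \ref{alpha0} forces $F(0)>0$). The crucial point is that the ``potential drift'' $\dot\varphi$ is bounded: by \eqref{nablafandS} with $\lambda=0$ one has $(f')^2=C_1-\SS\le C_1$ since a steady GRS has $\SS\ge0$, hence $|f'|\le\sqrt{C_1}$, and as $\dot\varphi=f'/\sqrt\alpha$ we get $|\dot\varphi|\le M:=\sqrt{C_1}/\sqrt{\alpha(s_1)}$ on $[s_1,\infty)$.

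First I would view \eqref{est 60b} (with $q=1$) as a linear first-order ODE for $w:=\dot\alpha$, namely $\dot w+\big(\tfrac{\dot\beta}{\beta}-\dot\varphi\big)w=\tfrac{4m\alpha}{\beta^2}$, and multiply by the integrating factor $\beta e^{-\varphi}$ (admissible since $\beta>0$). This turns the left-hand side into an exact derivative:
\[\frac{d}{ds}\big(\dot\alpha\,\beta e^{-\varphi}\big)=\frac{4m\alpha}{\beta}\,e^{-\varphi}>0.\]
Thus $\Phi:=\dot\alpha\,\beta e^{-\varphi}$ is increasing, and using $\alpha\ge\alpha(s_1)$ and $\beta<c$ its derivative is bounded below by $Ke^{-\varphi}$ with $K:=4m\alpha(s_1)/c>0$; integrating from $s_1$ gives $\Phi(s)\ge K\int_{s_1}^s e^{-\varphi(r)}\,dr$. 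Solving back, $\dot\alpha=\Phi e^{\varphi}/\beta$, and a second use of $\beta<c$ yields the key estimate
\[\dot\alpha(s)\ \ge\ \frac{K}{c}\,e^{\varphi(s)}\int_{s_1}^s e^{-\varphi(r)}\,dr .\]

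It remains to bound the right-hand side below by a positive constant for large $s$, and this is exactly where the boundedness of $\dot\varphi$ is used. From $|\dot\varphi|\le M$ one gets $\varphi(r)\le\varphi(s)+M(s-r)$ for $s_1\le r\le s$, hence $e^{-\varphi(r)}\ge e^{-\varphi(s)}e^{-M(s-r)}$, and integrating in $r$ gives $e^{\varphi(s)}\int_{s_1}^s e^{-\varphi}\ge\big(1-e^{-M(s-s_1)}\big)/M$. For $s\ge s_1+1$ this exceeds the fixed constant $(1-e^{-M})/M>0$, so $\dot\alpha$ is bounded below by a positive constant there and $\alpha(s)\to\infty$, as desired.

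I expect the main obstacle to be the indefinite drift term $\dot\varphi\,\dot\alpha$ in \eqref{est 60b}: because $\dot\varphi<0$ it opposes the positive curvature term $4m\alpha/\beta^2$, so neither a crude sign analysis nor the energy identity of Remark \ref{Remark 4.2, draft}(2) closes the argument. The device that resolves this is the integrating factor $\beta e^{-\varphi}$, which absorbs simultaneously the drift $\dot\varphi$ and the a priori uncontrolled logarithmic derivative $\dot\beta/\beta$ and exposes a genuinely monotone quantity; after that the remaining work is the soft integral estimate above, whose only analytic input is the bound $|f'|\le\sqrt{C_1}$. I would stress that the proof uses only the upper bound $\beta<c$ and never a lower bound on $\beta$, so it is insensitive to whether $F$ stays away from zero.
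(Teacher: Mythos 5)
Your proof is correct, but it takes a genuinely different route from the paper's. The paper argues by contradiction: if $\alpha$ were bounded, then Proposition \ref{alphamonotone} forces $\dot\alpha\to 0$ and $\liminf_{s\to\infty}|\ddot\alpha|=0$, while Lemma \ref{betaatmost} forces $\beta$ to converge to some limit $c_2\geq 0$; a case analysis on $c_2=0$ versus $c_2>0$ then shows from \eqref{est 60b} that $\ddot\alpha$ stays above a positive constant for large $s$, a contradiction. You instead give a direct argument: the integrating factor $\beta e^{-\varphi}$ turns \eqref{est 60b} into the exact identity $\frac{d}{ds}\big(\dot\alpha\,\beta e^{-\varphi}\big)=\frac{4m\alpha}{\beta}e^{-\varphi}>0$ (the cross terms do cancel, as I checked), and the drift bound $|\dot\varphi|\leq\sqrt{C_1}/\sqrt{\alpha(s_1)}$ --- the same bound the paper extracts from \eqref{nablafandS}, \eqref{est 2}--\eqref{est 3} and $\SS\geq 0$ --- converts monotonicity of $\Phi=\dot\alpha\,\beta e^{-\varphi}$ into a uniform positive lower bound for $\dot\alpha$ on a half-line. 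Your route buys something: it bypasses Lemma \ref{betaatmost} entirely (no case analysis on $\lim_{s\to\infty}\beta$), and it yields the quantitative conclusion that $\alpha$ grows at least linearly, whereas the paper's soft limiting argument only gives divergence; the paper's proof, on the other hand, needs no clever integrating factor, only the convergence structure already established. Two cosmetic corrections: Lemma \ref{alpha0} asserts only $H(0)=0$, not $F(0)>0$ (a priori both could vanish at the singular foil), but this is immaterial since you work on $[s_1,\infty)$ with $s_1>0$, where $\beta>0$ by non-degeneracy of the metric away from $t=0$; and if $C_1=0$ your constant $M$ vanishes and the final division fails, but then $f'\equiv 0$ and $\SS\equiv 0$, so the soliton is trivial and excluded --- or simply replace $M$ by $\max(M,1)$.
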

\begin{proof}
We proceed by contradiction. Assume that there exists $L \in (0,\infty)$ such that $\alpha(s)< L$ for all $s\in [0, \infty)$. By Lemma \ref{alphamonotone}, 
\begin{align*}
\lim_{s\rightarrow \infty}\alpha(s) &=c_1<\infty,\\
\lim_{s\rightarrow \infty}\dot{\alpha}(s) &=0.
\end{align*}
Consequently, $\liminf_{s\rightarrow \infty} |\ddot{\alpha} (s)|=0$. Also, by Lemma \ref{betaatmost}, 
	\begin{align*}
		\lim_{s\rightarrow \infty}\beta (s) &=c_2<\infty,\\
		\lim_{s\rightarrow \infty}\dot{\beta} (s) &=0.
	\end{align*} 
Next,  (\ref{est 60b}), together with the hypothesis that $q=1$, gives us 
\[\ddot{\alpha} = \frac{\alpha 4m}{\beta^{2}} - \frac{\dot{\alpha} \dot{\beta}}{\beta} + \dot{\varphi} \dot{\alpha}.\]
We have, by (\ref{est 3}), \eqref{est 2}, and (\ref{nablafandS})
\[|\dot{\varphi}|=\frac{|f'|}{\sqrt{\alpha}}<c_3.\]

\textbf{Case 1 $c_2=0$: } By Lemma \ref{betaatmost}, $\dot{\beta}<0$ for all sufficiently large $s$. Also, by assumption we have $\lim_{s\to\infty} \beta(s) = 0$ so that for all $s > 0$ sufficiently large, we have $\beta(s) > 0$; hence, for all such sufficiently large $s > 0$, we have $- \frac{\dot{\alpha}(s) \dot{\beta}(s)}{\beta(s)} > 0$. Consequently, 
\[\ddot{\alpha} = \frac{\alpha  4m}{\beta^{2}} - \frac{\dot{\alpha} \dot{\beta}}{\beta} + \dot{\varphi}\dot{\alpha}> \frac{\alpha 4m}{\beta^{2}} + \dot{\varphi} \dot{\alpha}.\] 
Since $\lim_{s\rightarrow \infty}\dot{\alpha}(s) =0$ and $\lvert \dot{\varphi} \rvert <c_3$, for all $s> s_{1}$ with a sufficiently large $s_1$, we have $\frac{\alpha 4 m}{\beta^{2}} \geq c_{1} 4m$ and $\lvert \dot{\varphi} \dot{\alpha} \rvert < c_{1} 2m$ so that 
\[\ddot{\alpha} \geq \frac{\alpha 4m}{\beta^{2}} - \lvert \dot{\varphi} \dot{\alpha} \rvert > 2m c_1>0.\]
This is a contradiction to  $\liminf_{s\rightarrow \infty} |\ddot{\alpha} (s)|=0$.

\textbf{Case 2 $c_2>0$: } Then
\[\lim_{s\rightarrow \infty} \frac{\dot{\alpha} \dot{\beta}}{\beta}=0.\]
By a similar argument as above, one arrives at a contradiction. 
 
\end{proof}

\begin{lemma}\label{ricatti1}
If $\beta <c$ then for any $\epsilon>0$, there is $s_0(\epsilon)$ such that, for $u=\frac{\dot{\alpha}}{\alpha}$, 
\[ (\dot{u}+u^2+\epsilon u)(s) \geq \frac{4m}{\beta^2(s)}>\frac{4m}{c^2}\]
for all $s\geq s_0$. 
\end{lemma}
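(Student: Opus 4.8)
The plan is to turn the desired inequality into a Riccati identity for $u=\dot\alpha/\alpha$ and then show that the lower-order correction is nonnegative for large $s$. Writing $u=\partial_s\ln\alpha$, one has $\dot u+u^2=\ddot\alpha/\alpha$, so substituting \eqref{est 60b} (with $q=1$) yields
\[ \dot u+u^2+\epsilon u=\frac{4m}{\beta^2}+u\Big(\epsilon+\dot\varphi-\frac{\dot\beta}{\beta}\Big). \]
Since $\beta<c$ forces $\frac{4m}{\beta^2}>\frac{4m}{c^2}$, and since $\alpha\to\infty$ by Lemma \ref{bfiniteablow} together with $\dot\alpha>0$ by Proposition \ref{alphamonotone} gives $u>0$, the whole statement reduces to proving that $\epsilon+\dot\varphi-\dot\beta/\beta\ge 0$, i.e. $\frac{\dot\beta}{\beta}-\dot\varphi\le\epsilon$, for all sufficiently large $s$. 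Thus it suffices to show $\limsup_{s\to\infty}\big(\frac{\dot\beta}{\beta}-\dot\varphi\big)\le 0$ and then choose $s_0(\epsilon)$ accordingly.

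The term $\dot\varphi$ is the easy one. From \eqref{nablafandS} with $\lambda=0$ we have $\SS+|\nabla f|^2=C_1$, and a complete steady GRS has $\SS\ge 0$, whence $|f'|=|\nabla f|\le\sqrt{C_1}$ is bounded. By \eqref{est 2} we have $|\dot\varphi|=|f'|/\sqrt\alpha\le\sqrt{C_1}/\sqrt\alpha\to 0$ because $\alpha\to\infty$ (Lemma \ref{bfiniteablow}). So $\dot\varphi\to 0$, and it remains to control $w:=\dot\beta/\beta$ from above.

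The key input is the decay of the mean curvature. The corollary to Theorem \ref{main1} applies here (Theorem \ref{main1} supplies the singular foil, which we placed at $t=0$), giving $0<\tr L\le n/t$, hence $\tr L\to 0$ as $s\to\infty$ since $t\to\infty$ by \eqref{ttoinfity}. For the ansatz one computes, using \eqref{est 2}, that the mean curvature $\tr L=H'/H+2m\,F'/F$ becomes $\tr L=\tfrac{\sqrt\alpha}{2}\,u+m\sqrt\beta\,w$. As $u>0$, this forces $m\sqrt\beta\,w<\tr L$. Now I would invoke Lemma \ref{betaatmost}: $\beta$ has at most one extremum and is bounded, so it is eventually monotone. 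If $\beta$ is eventually decreasing, then $w\le 0$; if $\beta$ is eventually increasing, then $\beta$ is bounded below by a positive constant, so $w<\tr L/(m\sqrt\beta)\to 0$. In either case $\limsup_{s\to\infty}w\le 0$, which combined with $\dot\varphi\to 0$ gives $\limsup_{s\to\infty}(w-\dot\varphi)\le 0$ and closes the argument.

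The main obstacle is exactly the case where $\beta$ increases to a finite positive limit: there $w\ge 0$ is not controlled by monotonicity alone, and attacking the second-order ODE \eqref{est 60c} for $w$ directly is dimension-sensitive (the sign of the $(\dot\beta)^2(1-m)$ term changes with $m$) and messy. Routing the estimate through the universal mean-curvature decay $\tr L\le n/t$ is what makes this step clean and uniform in $m$: recognizing the decomposition $\tr L=\tfrac{\sqrt\alpha}{2}u+m\sqrt\beta\,w$ with $u>0$ lets the positive term $\tfrac{\sqrt\alpha}{2}u$ absorb the unwanted contribution and bound $w$ from above by a quantity tending to $0$.
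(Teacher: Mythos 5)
Your proof is correct in substance, and it takes a genuinely different route from the paper at the one nontrivial step. Both you and the paper start from the same Riccati identity derived from \eqref{est 60b} (note: as printed, \eqref{est 60b} drops a factor of $m$ on the $\dot{\alpha}\dot{\beta}/\beta$ term relative to \eqref{est 52b}; the discrepancy is immaterial here), use $u>0$ from Proposition \ref{alphamonotone}, and kill $\dot{\varphi}=f'/\sqrt{\alpha}$ via the boundedness of $f'$ from \eqref{nablafandS} and $\alpha\to\infty$ from Lemma \ref{bfiniteablow}. The paper then disposes of the term $\dot{\beta}u/\beta$ by a case analysis on whether $\lim_{s\to\infty}\beta$ is zero or positive, leaning (in the positive-limit case) on the assertion that $\dot{\beta}\to 0$, a fact that monotonicity plus boundedness alone does not supply. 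You instead import the geometric decay $0<\tr L\le n/t$ from the unnumbered Corollary in Section \ref{Section 3, Singular Foils} — legitimate, since it is established before Section \ref{Section 4, Estimates} and Theorem \ref{main1} is already invoked there to place the singular foil at $t=0$ — and split $\tr L$ into a positive $u$-part plus the $\dot{\beta}/\beta$-part. One computational slip: by \eqref{est 2}--\eqref{est 3} one has $H'/H=\tfrac{\sqrt{\alpha}}{2}u$ but $2mF'/F=m\sqrt{\alpha}\,\dot{\beta}/\beta$, not $m\sqrt{\beta}\,\dot{\beta}/\beta$. The correction only strengthens your argument: since $u>0$,
\[
m\sqrt{\alpha}\,\frac{\dot{\beta}}{\beta}<\tr L\le \frac{n}{t},\qquad\text{hence}\qquad \frac{\dot{\beta}}{\beta}<\frac{n}{t\,m\sqrt{\alpha}}\longrightarrow 0
\]
pointwise by \eqref{ttoinfity} and Lemma \ref{bfiniteablow}, so you need neither the eventual monotonicity of $\beta$ from Lemma \ref{betaatmost} nor a positive lower bound on $\beta$; the case analysis in your last step can be deleted entirely. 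In short: the paper's proof stays inside the ODE system but has a soft spot at $0<\lim\beta<\infty$, while yours trades that for an appeal to the mean-curvature decay and, once the change-of-variables formula is fixed, is the cleaner and more robust of the two.
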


\begin{proof}
	If $\beta <c$, 
For $u=\frac{\dot{\alpha}}{\alpha}$, \eqref{est 60b} then reads
\begin{align*}
\dot{u}+u^2 = \frac{4m}{\beta^2}-\frac{m\dot{\alpha} \dot{\beta}}{\alpha \beta} + \dot{\varphi}\frac{\dot{\alpha}}{\alpha} =\frac{4m}{\beta^2}-\frac{m\dot{\beta}}{\beta} u + \dot{\varphi} u.
\end{align*}
By \eqref{est 3c}, \eqref{est 2}, and Lemma \ref{bfiniteablow}, $\lim_{s\rightarrow \infty} \dot{\varphi}(s) = \lim_{s\to\infty} \frac{ f'(t)}{\sqrt{\alpha(s)}} =0$. Thus, by considering two cases on whether $\lim_{s\rightarrow \infty} \beta=0$ or $0<\lim_{s\rightarrow \infty} \beta <\infty$, we have,
for sufficiently large $s_0$,
\[m\frac{\dot{\beta}}{\beta} u - \dot{\varphi} u\leq \epsilon u.\]
The result then follows. 
\end{proof}

\begin{lemma}\label{betablow}
$\lim_{s\rightarrow \infty}\beta(s)=\infty$. 
\end{lemma}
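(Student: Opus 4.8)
The plan is to argue by contradiction and exploit the completeness relation \eqref{ttoinfity}. Suppose $\beta$ does not tend to $\infty$. Since Lemma \ref{betaatmost} guarantees that $\beta$ has at most one local extremum, $\beta$ is eventually monotonic; hence if it fails to diverge it must be bounded above, say $\beta < c$ on $[0,\infty)$ (after enlarging $c$ if necessary). This is precisely the hypothesis needed to invoke Lemmas \ref{bfiniteablow} and \ref{ricatti1}.

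With $\beta < c$ fixed, I would set $u = \dot{\alpha}/\alpha$ (note $u>0$ for $s>0$ by Proposition \ref{alphamonotone}) and $\delta := 4m/c^{2}>0$. Fixing any convenient $\epsilon>0$ and calling on Lemma \ref{ricatti1}, there is $s_{0}$ so that for $s\geq s_{0}$,
\[
\dot{u} \;\geq\; \delta - \epsilon u - u^{2} \;=:\; g(u).
\]
The function $g$ is strictly decreasing on $[0,\infty)$, is positive on $[0,u_{+})$, and vanishes at its unique positive root $u_{+} = \tfrac{1}{2}\bigl(-\epsilon + \sqrt{\epsilon^{2}+4\delta}\,\bigr) > 0$.

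The heart of the argument is a trapping estimate for this one-sided Riccati inequality, producing a positive lower bound for $u$. Fix $\eta\in(0,u_{+})$ and set $c_{1} := g(u_{+}-\eta)>0$. First, the region $\{u\geq u_{+}-\eta\}$ is forward invariant: at any point where $u=u_{+}-\eta$ one has $\dot{u}\geq g(u_{+}-\eta)=c_{1}>0$, so $u$ is strictly increasing there and cannot cross the level $u_{+}-\eta$ downward. Second, this region is reached in finite time: while $u\leq u_{+}-\eta$, monotonicity of $g$ gives $\dot{u}\geq c_{1}$, so $u$ grows at a uniform positive rate until it attains $u_{+}-\eta$. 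Consequently there is $s_{3}\geq s_{0}$ with $u(s)\geq u_{+}-\eta$ for all $s\geq s_{3}$.

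This lower bound forces at-least-exponential growth of $\alpha$: integrating $\dot{\alpha}/\alpha = u \geq u_{+}-\eta$ gives $\alpha(s)\geq \alpha(s_{3})e^{(u_{+}-\eta)(s-s_{3})}$ with $\alpha(s_{3})>0$. Hence $\int_{s_{3}}^{\infty}\alpha^{-1/2}\,ds<\infty$, while $\int_{0}^{s_{3}}\alpha^{-1/2}\,ds = t(s_{3})<\infty$ by smoothness of the metric. Therefore $t(s)=\int_{0}^{s}\alpha^{-1/2}\,dx$ remains bounded as $s\to\infty$, contradicting \eqref{ttoinfity}. This contradiction shows $\beta$ is unbounded, and since $\beta$ is eventually monotonic it must satisfy $\lim_{s\to\infty}\beta(s)=\infty$. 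I expect the third step to be the main obstacle — extracting a uniform positive lower bound for $u$ from the merely one-sided inequality $\dot{u}\geq g(u)$ — which the forward-invariance and finite-entry trapping argument resolves.
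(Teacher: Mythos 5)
Your proof is correct and takes essentially the same approach as the paper's: reduce to the bounded case $\beta<c$, invoke Lemma \ref{ricatti1} for the Riccati inequality, trap $u=\dot{\alpha}/\alpha$ above a positive constant by a forward-invariance argument (the paper's ``maximum principle'' step, which uses the explicit constants $C=\sqrt{5c_1/11}$ and $\epsilon=C/10$ where you use the root $u_{+}$ of $g$), and conclude that the resulting exponential growth of $\alpha$ contradicts \eqref{ttoinfity}. The only substantive difference is presentational: you make explicit, via Lemma \ref{betaatmost}, that failure to diverge implies boundedness, which the paper leaves implicit.
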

\begin{proof}
	We proceed by contradiction. Suppose that $\beta<c$.  	
	
	\textbf{Claim:} There is sufficiently large $s_1$ and a constant $C>0$ such that, for $u=\frac{\dot{\alpha}}{\alpha}$, $u(s)>C$ for $s\geq s_1$. 
	
	\textbf{Proof of the claim:} For $c_1=\frac{4m}{c^2}$, pick $C=\sqrt{5c_1/11}$ and then $\epsilon=C/10$. By Lemma \ref{ricatti1} there exists a sufficiently large $s_0$ such that for all $s\geq s_0$,  
	\[\dot{u}+u^2+\epsilon u\geq \frac{4m}{\beta^2}>\frac{4m}{c^2}=c_1.\]
	If $u(s)\leq C$ for all $s \geq s_{0}$, then, since $0<u$ by Lemma \ref{alphamonotone}, $\lvert u \rvert \leq C$ implying that 
	\[\dot{u} \geq  c_1-(C^2+\epsilon C) =\frac{c_1}{2}. \] 
	Thus, there is $s_1 (s_0, c_1)>s_0$ such that $u(s_1)>C$. Then, by the maximum principle, $u(s)>C$ for all $s\geq s_1$. The proof of the claim is finished. 
	
Next, we integrate $u(s)$ over $[s_{1}, s_{1}+ \ell]$, use the fact that $u(s) > C$ to deduce 
\begin{align*} C \ell \leq \int_{s_1}^{s_1+\ell} u ds=  \int_{s_1}^{s_1+\ell} \frac{\dot{\alpha}}{\alpha} ds \leq \ln(\alpha)(s_1+\ell)-\ln(\alpha)(s_1).
\end{align*}	
Therefore, $\alpha$ has at least exponential growth, which is a contradiction to (\ref{ttoinfity}).  
\end{proof}

\begin{proof}[Proof of Theorem \ref{main3}] It follows from Lemmas \ref{alphamonotone}, \ref{betaatmost}, and \ref{betablow} in view of (\ref{est 3}).
	
\end{proof}

\section*{Acknowledgements}
This material is based upon work supported by the National Science Foundation under Grant No. DMS-1928930, while the first author was in residence at the MSRI in Berkeley, California, during the Fall semester of 2024. The second author is supported by the Simons Foundation (962572, KY). We also would like to thanks VIASM for its hospitality and support during our visit in summer 2025. 

\def\cprime{$'$}
\bibliographystyle{plain}
\bibliography{bioMorse}

\def\cprime{$'$}
\begin{thebibliography}{10}

\bibitem{Abresch83foursix}
U.~Abresch.
\newblock Isoparametric hypersurfaces with four or six distinct principal
  curvatures. {N}ecessary conditions on the multiplicities.
\newblock {\em Math. Ann.}, 264(3):283--302, 1983.

\bibitem{appleton2023}
Alexander Appleton.
\newblock Eguchi--hanson singularities in u (2)-invariant ricci flow.
\newblock {\em Peking Mathematical Journal}, 6(1):1--141, 2023.

\bibitem{bando87}
Shigetoshi Bando.
\newblock Real analyticity of solutions of {H}amilton's equation.
\newblock {\em Math. Z.}, 195(1):93--97, 1987.

\bibitem{bohmwilking}
Christoph B{\"o}hm and Burkhard Wilking.
\newblock Manifolds with positive curvature operators are space forms.
\newblock {\em Ann. of Math. (2)}, 167(3):1079--1097, 2008.

\bibitem{bolton73}
J.~Bolton.
\newblock Transnormal systems.
\newblock {\em Quart. J. Math. Oxford Ser. (2)}, 24:385--395, 1973.

\bibitem{bs091}
Simon Brendle and Richard Schoen.
\newblock Manifolds with {$1/4$}-pinched curvature are space forms.
\newblock {\em J. Amer. Math. Soc.}, 22(1):287--307, 2009.

\bibitem{bs072}
Simon Brendle and Richard~M. Schoen.
\newblock Classification of manifolds with weakly {$1/4$}-pinched curvatures.
\newblock {\em Acta Math.}, 200(1):1--13, 2008.

\bibitem{buttsworth2021}
Timothy Buttsworth.
\newblock $ su (2) $-invariant steady gradient ricci solitons on
  four-manifolds.
\newblock {\em arXiv preprint arXiv:2111.12807}, 2021.

\bibitem{BDW15}
M.~Buzano, A.~S. Dancer, and M.~Wang.
\newblock A family of steady {R}icci solitons and {R}icci flat metrics.
\newblock {\em Comm. Anal. Geom.}, 23(3):611--638, 2015.

\bibitem{caohd96}
Huai-Dong Cao.
\newblock Existence of gradient {K}\"ahler-{R}icci solitons.
\newblock In {\em Elliptic and parabolic methods in geometry (Minneapolis, MN,
  1994)}, pages 1--16. A K Peters, Wellesley, MA, 1996.

\bibitem{caohd97limits}
Huai-Dong Cao.
\newblock Limits of solutions to the {K}\"{a}hler-{R}icci flow.
\newblock {\em J. Differential Geom.}, 45(2):257--272, 1997.

\bibitem{caozhou10}
Huai-Dong Cao and Detang Zhou.
\newblock On complete gradient shrinking {R}icci solitons.
\newblock {\em J. Differential Geom.}, 85(2):175--185, 2010.

\bibitem{caotran6survey}
Xiaodong Cao and Hung Tran.
\newblock Geometry and analysis of gradient {R}icci solitons in dimension four.
\newblock {\em Surveys in Differential Geometry}, 27(1):213--233, 2022.

\bibitem{cartan38}
\'{E}lie Cartan.
\newblock Familles de surfaces isoparam\'{e}triques dans les espaces \`a
  courbure constante.
\newblock {\em Ann. Mat. Pura Appl.}, 17(1):177--191, 1938.

\bibitem{CCJ07four}
Thomas~E. Cecil, Quo-Shin Chi, and Gary~R. Jensen.
\newblock Isoparametric hypersurfaces with four principal curvatures.
\newblock {\em Ann. of Math. (2)}, 166(1):1--76, 2007.

\bibitem{CV96}
Thierry Chave and Galliano Valent.
\newblock On a class of compact and non-compact quasi-{E}instein metrics and
  their renormalizability properties.
\newblock {\em Nuclear Phys. B}, 478(3):758--778, 1996.

\bibitem{CZ2021rigidity}
Xu~Cheng and Detang Zhou.
\newblock Rigidity of four-dimensional gradient shrinking ricci solitons.
\newblock {\em arXiv preprint arXiv:2105.10744}, 2021.

\bibitem{chi20four}
Quo-Shin Chi.
\newblock Isoparametric hypersurfaces with four principal curvatures, {IV}.
\newblock {\em J. Differential Geom.}, 115(2):225--301, 2020.

\bibitem{chi20survey}
Quo-Shin Chi.
\newblock The isoparametric story, a heritage of \'{E}lie {C}artan.
\newblock In {\em Proceedings of the {I}nternational {C}onsortium of {C}hinese
  {M}athematicians 2018}, pages 197--260. Int. Press, Boston, MA, [2020]
  \copyright 2020.

\bibitem{chow2023}
Bennett Chow.
\newblock {\em Ricci solitons in low dimensions}, volume 235 of {\em Graduate
  Studies in Mathematics}.
\newblock American Mathematical Society, Providence, RI, [2023] \copyright
  2023.

\bibitem{CDSexpandshriking19}
Ronan~J Conlon, Alix Deruelle, and Song Sun.
\newblock Classification results for expanding and shrinking gradient
  {K}ähler–{R}icci solitons.
\newblock {\em Geom. Topol.}, 28(1):267–351, February 2024.

\bibitem{DW09expanding}
Andrew~S. Dancer and McKenzie~Y. Wang.
\newblock Non-{K}\"{a}hler expanding {R}icci solitons.
\newblock {\em Int. Math. Res. Not. IMRN}, (6):1107--1133, 2009.

\bibitem{dw09}
Andrew~S. Dancer and McKenzie~Y. Wang.
\newblock Some new examples of non-{K}\"{a}hler {R}icci solitons.
\newblock {\em Math. Res. Lett.}, 16(2):349--363, 2009.

\bibitem{dw11}
Andrew~S. Dancer and McKenzie~Y. Wang.
\newblock On {R}icci solitons of cohomogeneity one.
\newblock {\em Ann. Global Anal. Geom.}, 39(3):259--292, 2011.

\bibitem{derd21}
Andrzej Derdzinski.
\newblock Killing potentials with geodesic gradients on {K}\"{a}hler surfaces.
\newblock {\em Indiana Univ. Math. J.}, 61(4):1643--1666, 2012.

\bibitem{di20}
Francesco Di~Giovanni.
\newblock Ricci flow of warped berger metrics on r 4.
\newblock {\em Calculus of Variations and Partial Differential Equations},
  59(5):162, 2020.

\bibitem{FIK03Kahler}
Mikhail Feldman, Tom Ilmanen, and Dan Knopf.
\newblock Rotationally symmetric shrinking and expanding gradient
  {K}{\"{a}}hler-{R}icci solitons.
\newblock {\em J. Differential Geom.}, 65(2):169--209, 2003.

\bibitem{FG16csc}
Manuel Fern{\'a}ndez-L{\'o}pez and Eduardo Garc{\'\i}a-R{\'\i}o.
\newblock On gradient {R}icci solitons with constant scalar curvature.
\newblock {\em Proceedings of the American Mathematical Society},
  144(1):369--378, 2016.

\bibitem{gt13exotic}
Jianquan Ge and Zizhou Tang.
\newblock Isoparametric functions and exotic spheres.
\newblock {\em J. Reine Angew. Math.}, 683:161--180, 2013.

\bibitem{H3}
Richard~S. Hamilton.
\newblock Three-manifolds with positive {R}icci curvature.
\newblock {\em J. Differential Geom.}, 17(2):255--306, 1982.

\bibitem{HPCO}
Richard~S. Hamilton.
\newblock Four-manifolds with positive curvature operator.
\newblock {\em J. Differential Geom.}, 24(2):153--179, 1986.

\bibitem{Hsurvey}
Richard~S. Hamilton.
\newblock The formation of singularities in the {R}icci flow.
\newblock In {\em Surveys in differential geometry, Vol.\ II (Cambridge, MA,
  1993)}, pages 7--136. Internat. Press, Cambridge, MA, 1995.

\bibitem{HM11comp}
Robert Haslhofer and Reto M{\"u}ller.
\newblock A compactness theorem for complete {R}icci shrinkers.
\newblock {\em Geom. Funct. Anal.}, 21(5):1091--1116, 2011.

\bibitem{Ivey94}
Thomas Ivey.
\newblock New examples of complete {R}icci solitons.
\newblock {\em Proc. Amer. Math. Soc.}, 122(1):241--245, 1994.

\bibitem{kim17harmonic}
Jongsu Kim.
\newblock On a classification of 4-d gradient {R}icci solitons with harmonic
  {W}eyl curvature.
\newblock {\em J. Geom. Anal.}, 27(2):986--1012, 2017.

\bibitem{koi90}
Norihito Koiso.
\newblock On rotationally symmetric {H}amilton's equation for
  {K}\"ahler-{E}instein metrics.
\newblock In {\em Recent topics in differential and analytic geometry},
  volume~18 of {\em Adv. Stud. Pure Math.}, pages 327--337. Academic Press,
  Boston, MA, 1990.

\bibitem{lv37}
Tullio Levi-Civita.
\newblock Famiglie di superficie isoparametriche nell'ordinario spazio
  euclideo.
\newblock {\em Atti Accad. Naz. Lincei. Rend. Cl. Sci. Fis. Mat. Natur.},
  26:355--362, 1937.

\bibitem{miyaoka13}
Reiko Miyaoka.
\newblock Transnormal functions on a {R}iemannian manifold.
\newblock {\em Differential Geom. Appl.}, 31(1):130--139, 2013.

\bibitem{Munzner80iso}
Hans~Friedrich M\"{u}nzner.
\newblock Isoparametrische {H}yperfl\"{a}chen in {S}ph\"{a}ren.
\newblock {\em Math. Ann.}, 251(1):57--71, 1980.

\bibitem{nomizu73}
Katsumi Nomizu.
\newblock Some results in {E}. {C}artan's theory of isoparametric families of
  hypersurfaces.
\newblock {\em Bull. Amer. Math. Soc.}, 79:1184--1188, 1973.

\bibitem{PTV99quasi}
Henrik Pedersen, Christina T{\o}nnesen-Friedman, and Galliano Valent.
\newblock Quasi-{E}instein {K}\"{a}hler metrics.
\newblock {\em Lett. Math. Phys.}, 50(3):229--241, 1999.

\bibitem{perelman1}
Grisha Perelman.
\newblock The entropy formula for the {R}icci flow and its geometric
  applications.
\newblock {\em arXiv:math.DG/0211159}, 2002.

\bibitem{perelman3}
Grisha Perelman.
\newblock Finite extinction time for the solutions to the {R}icci flow on
  certain three-manifolds.
\newblock {\em arXiv:math.DG/0307245}, 2003.

\bibitem{perelman2}
Grisha Perelman.
\newblock Ricci flow with surgery on three-manifolds.
\newblock {\em arXiv:math.DG/0303109}, 2003.

\bibitem{QT15}
Chao Qian and Zizhou Tang.
\newblock Isoparametric functions on exotic spheres.
\newblock {\em Adv. Math.}, 272:611--629, 2015.

\bibitem{som1918}
Carlo Somigliana.
\newblock Sulle relazione fra il principio di huygens e l'ottica geometrica,
  also.
\newblock {\em Atti Acc. Sc. Torino}, pages 974--979, 1918.

\bibitem{tran23contact}
Hung Tran.
\newblock K\"{a}hler solitons, contact structures, and isoparametric functions,
  2023.

\bibitem{tran23kahler}
Hung Tran.
\newblock Kähler gradient ricci solitons with large symmetry.
\newblock {\em Advances in Mathematics}, 470, 2025.

\bibitem{wang87iso}
Qi~Ming Wang.
\newblock Isoparametric functions on {R}iemannian manifolds. {I}.
\newblock {\em Math. Ann.}, 277(4):639--646, 1987.

\bibitem{Yauopen93}
Shing-Tung Yau.
\newblock Open problems in geometry.
\newblock In {\em Differential geometry: partial differential equations on
  manifolds ({L}os {A}ngeles, {CA}, 1990)}, volume~54 of {\em Proc. Sympos.
  Pure Math.}, pages 1--28. Amer. Math. Soc., Providence, RI, 1993.

\bibitem{zhang09completeness}
Zhu-Hong Zhang.
\newblock On the completeness of gradient {R}icci solitons.
\newblock {\em Proc. Amer. Math. Soc.}, 137(8):2755--2759, 2009.

\end{thebibliography}

\end{document}